\documentclass[psamsfonts]{amsart}

\usepackage{amssymb,amsfonts}
\usepackage{amsmath}
\usepackage[all,arc]{xy}
\usepackage{enumerate}
\usepackage{mathrsfs}
\usepackage{mathtools}
\usepackage{cite}
\usepackage[unicode]{hyperref}
\usepackage{bookmark}
\newtheorem{thm}{Theorem}[section]
\newtheorem{theorem}[thm]{Theorem}

\newtheorem{prop}[thm]{Proposition}
\newtheorem{lemma}[thm]{Lemma}

\newtheorem{claim}[thm]{Claim}
\newtheorem{fact}[thm]{Fact}

\theoremstyle{definition}
\newtheorem{defn}[thm]{Definition}

\newcommand{\defeq}{\mathrel{\mathop:}=}

\newcommand{\R}{\mathbb{R}}

\newcommand{\Q}{\mathbb{Q}}
\newcommand{\PP}{\mathbb{P}}

\newcommand{\LL}{\mathbb{L}}
\newcommand{\CC}{\mathbb{C}}

\newcommand{\FF}{\mathbb{F}}

\newcommand{\la}{\lambda}
\newcommand{\ka}{\kappa}
\newcommand{\w}{\omega}
\newcommand{\sm}{\setminus}
\newcommand{\mc}{\mathcal}

\DeclareMathOperator{\cf}{cf}
\DeclareMathOperator{\cof}{cof}

\DeclareMathOperator{\ITP}{ITP}
\DeclareMathOperator{\TP}{TP}

\DeclareMathOperator{\ot}{ot}
\DeclareMathOperator{\Coll}{Col}

\DeclareTextCommand{\textaleph}{L8U}{ℵ}

\newcommand{\uhr}{\upharpoonright}
\newcommand{\forces}{\Vdash}

\makeatletter
\let\c@equation\c@thm
\makeatother
\numberwithin{equation}{section}

\title{Strong Tree Properties Along Many Segments of Successors of Singulars}
\author{William Adkisson}
\begin{document}

\begin{abstract}
	The strong tree property and the super tree property (also called ITP) are generalizations of the tree property that characterize strong compactness and supercompactness up to inaccessibility. That is, an inaccessible cardinal $\kappa$ is strongly compact if and only if the strong tree property holds at $\ka$, and supercompact if and only if ITP holds at $\kappa$.
	Generalizing a result of Golshani and Hayut, we show that from large cardinals it is consistent for ITP to hold simultaneously at any countable initial segment of successors of singular cardinals. More formally, given any countable ordinal $\theta$, we construct a forcing extension in which ITP holds at the first $\theta$ successors of singulars. We then extend this result further to obtain the strong tree property on long segments of successors of singular cardinals of multiple cofinalities simultaneously.
\end{abstract}

\maketitle

\section{Introduction}
The tree property, an uncountable generalization of K\"onig's Lemma, holds at a cardinal $\ka$ if every tree of height $\ka$ with levels of size $<\ka$ has a cofinal branch. The tree property at uncountable cardinals is closely connected with large cardinals: if $\ka$ is inaccessible, then the tree property holds at $\ka$ if and only if $\ka$ is weakly compact. Moreover, if the tree property holds at a small cardinal like $\aleph_2$, that cardinal must be weakly compact in $L$.

A long-running project in set theory, begun by Magidor in the 1970s, is obtaining the tree property at all regular cardinals above $\aleph_1$ simultaneously. Since the tree property always fails at $\aleph_1$ and at singular cardinals, this question asks if it is consistent for the tree property to hold at all possible cardinals simultaneously. This can be seen as a way of measuring how much large cardinal strength it is possible to have in a model of ZFC: if all possible cardinals have the tree property, than each carries some remnant of the strength of a weakly compact cardinal.

Since this question was posed, progress has been slow but steady. There have been too many results to give a full account of the history, but we will list some of the highlights. From countable many supercompacts, Cummings and Foreman \cite{CF_TreeProp} forced the tree property at each $\aleph_n$ for $1<n<\w$ simultaneously; Neeman \cite{NeemanTPNw+1} extended this result to include the tree property at $\aleph_{\w+1}$. The current record is due to Cummings, Hayut, Magidor, Neeman, Sinapova, and Unger \cite{cummings:tpNw2+4}, who obtained the tree property up to $\aleph_{\w^2+4}$, with $\aleph_{\w^2}$ strong limit.

Specialized techniques are required to obtain the tree property at successors of singular cardinals, which tend to behave in a different manner than successors of regular cardinals. Magidor and Shelah \cite{MS_TPSuccSing} showed that it was consistent for the tree property to hold at $\aleph_{\w+1}$ from very strong large cardinal hypotheses; these large cardinal assumptions were improved by Sinapova \cite{sinapova:tpatNw+1} to countably many supercompacts. As part of obtaining the tree property up to $\aleph_{\w+1}$ in \cite{NeemanTPNw+1}, Neeman refined these techniques, giving a general class of forcings that will obtain the tree property at $\aleph_{\w+1}$.

Obtaining the tree property at many successors of singular cardinals simultaneously requires more complicated constructions. This was first achieved by Golshani and Hayut \cite{golshani-hayut:tpcountablesegment}, who (assuming the existence of many supercompact cardinals) obtained the tree property at an arbitrary countable initial segment of successors of singular cardinals. That is, given any countable ordinal $\theta$, they built a forcing extension in which the tree property holds at the first $\theta$ successors of singular cardinals.

Magidor's question can also posed for several generalizations of the tree property. Of particular interest are the strong tree property and the super tree property (also called ITP). These properties characterize strong compactness and supercompactness up to inaccessibility in the same way that the tree property characterizes weak compactness. Recall that if $\ka$ is inaccessible, then $\ka$ is weakly compact if and only if the tree property holds at $\ka$. Jech \cite{jech:combinatorialprobs} showed that if $\ka$ is inaccessible, then the strong tree property holds at $\ka$ if and only if $\ka$ is strongly compact; Magidor \cite{magidor:combcharsc} proved that if $\ka$ is inaccessible, then it is supercompact if and only if the super tree property (also known as ITP) holds at $\ka$. In the absence of canonical inner models for strongly compacts or supercompacts, the presence of the associated tree properties is some of the best heuristic evidence we have for large-cardinal lower bounds of properties like the proper forcing axiom.

While these properties were originally formulated in the 70s, there has been a renewal of interest in the past few decades. In particular, Magidor's problem can also be posed for these strengthenings of the tree property, and many constructions forcing the tree property at many cardinals simultaneously have been extended to the strong tree property and ITP. Fontanella\cite{fontanella:CF} and Unger\cite{UngerCF} independently generalized the Cummings and Foreman result to ITP, showing that it is consistent for ITP to hold at $\aleph_n$ for $1<n<\w$. Focusing on successors of singular cardinals, Sinapova and Hachtman \cite{HachtmanITPNw+1} showed that from countably many supercompacts, it is consistent for ITP to hold at $\aleph_{\w+1}$. Building on their techniques, the author \cite{adkisson:ITP} described a general class of forcings that can obtain the strong and super tree properties at successors of a singular cardinals, and used this to show that in Neeman's construction, ITP holds at each $\aleph_n$ and the strong tree property holds at $\aleph_{\w+1}$.

In this paper, we generalize Golshani and Hayut's result to these stronger properties, showing that (assuming the existence of  many supercompacts) it is consistent that ITP can hold at arbitrary countable initial segments of successors of singular cardinals. More precisely, given $\theta < \w_1$, we will construct a model in which $\ITP(\aleph_{\alpha+1})$ holds for all limit ordinals $\alpha < \theta$.

Golshani and Hayut's result focuses on many successors of singulars of the same cofinality; it is also natural to examine the tree property and its strengthenings at successors of singulars of many cofinalities simultaneously.
In \cite{adkisson:manycofs}, the author forced the strong tree property to hold at successors of singulars of many cardinals simultaneously. The construction in that paper was directly inspired by the arguments of Golshani and Hayut, and uses very similar techniques, but applies them to a different part of the forcing. In Section \ref{s:manycofs}, we combine the two constructions, obtaining the strong tree property at long segments of singulars of many cofinalities simultaneously.

\section{Preliminaries}

We begin by defining the strong and super tree properties. These properties are similar to the standard tree property, which states that every tree property of height $\ka$ with levels of size $<\ka$ has a cofinal branch. The primary difference is that the strong and super tree properties are concerned with a more general class of tree-like objects, called thin $\mc{P}_\ka(\la)$-lists.

\begin{defn}
	Let $\ka$ be a regular cardinal and let $\la \geq \ka$. A sequence $d = \langle d_z \mid z\in \mc{P}_\ka(\la)\rangle$ is a \emph{$\mc{P}_\ka(\la)$-list} if for all $z \in \mc{P}_\ka(\la)$, $d_z\subseteq z$. 
	We define the $z$-th level of a list, denoted $L_z$, by
	\[L_z = \{d_y \cap z \mid z\subseteq y, y \in \mc{P}_\ka(\la)\}.\]
	A $\mc{P}_\ka(\la)$-list is \emph{thin} if $|L_z| < \ka$ for all $z \in \mc{P}_\ka(\la)$.
\end{defn}

%Note that if $\ka$ is inaccessible, every $\mc{P}_\ka(\la)$ list is thin.

\begin{defn}
	A set $b\subseteq \la$ is a \emph{cofinal branch} through a $\mc{P}_\ka(\la)$-list $d$ if $b\cap z \in L_z$ for all $z\in \mc{P}_\ka(\la)$.
	
	We say that $\TP(\ka,\la)$ holds if every thin $\mc{P}_\ka(\la)$-list has a cofinal branch.
	We say that the \emph{strong tree property} holds at $\ka$ if $\TP(\ka,\la)$ holds for all $\la \geq \ka$.
\end{defn}

To verify the strong tree property, it suffices to examine unboundedly many $\la$:

\begin{fact}\cite{fontanella:stpsuccsing}
	Let $\la' > \la$. Then $\TP(\ka, \la')$ implies $\TP(\ka,\la)$.
\end{fact}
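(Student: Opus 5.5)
The plan is to lift a thin $\mc{P}_\ka(\la)$-list to a thin $\mc{P}_\ka(\la')$-list by restriction, take a cofinal branch of the lifted list using $\TP(\ka,\la')$, and restrict that branch back to $\la$.

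Let $d = \langle d_z \mid z \in \mc{P}_\ka(\la)\rangle$ be a thin $\mc{P}_\ka(\la)$-list. Define a $\mc{P}_\ka(\la')$-list $d'$ by
\[ d'_y = d_{y\cap\la} \quad \text{for } y \in \mc{P}_\ka(\la'). \]
This makes sense because $y\cap\la \in \mc{P}_\ka(\la)$, and $d'_y \subseteq y\cap\la \subseteq y$. So $d'$ is a $\mc{P}_\ka(\la')$-list.

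Next we show $d'$ is thin. Fix $w \in \mc{P}_\ka(\la')$ and let $z = w\cap\la$. Any element of the level $L'_w$ has the form $d'_y\cap w$ for some $y \supseteq w$. Put $u = y\cap\la$; then $u \supseteq z$. Since $d_u \subseteq \la$, we get
\[ d'_y\cap w = d_u \cap w = d_u\cap z. \]
This is an element of the level $L_z$ of $d$. Hence $L'_w \subseteq L_z$, so $|L'_w| \le |L_z| < \ka$ and $d'$ is thin.

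Now apply $\TP(\ka,\la')$ to get a cofinal branch $b' \subseteq \la'$ through $d'$, and set $b = b'\cap\la$. We check that $b$ is a cofinal branch through $d$. Fix $z \in \mc{P}_\ka(\la)$; note $z$ is also in $\mc{P}_\ka(\la')$. By cofinality of $b'$ at $z$, there is $y \in \mc{P}_\ka(\la')$ with $y \supseteq z$ and $b'\cap z = d'_y\cap z$. Since $z\subseteq\la$, we have $b\cap z = b'\cap z$. Letting $u = y\cap\la \supseteq z$, this gives $b \cap z = d_u\cap z$, which lies in $L_z$. Thus $b$ is a cofinal branch through $d$, and $\TP(\ka,\la)$ holds.

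The only point needing care is the thinness verification, specifically that the lifted level $L'_w$ is contained in the original level at $w \cap \la$. This works because each $d_u$ is a subset of $\la$, so intersecting with $w$ is the same as intersecting with $w \cap \la$.
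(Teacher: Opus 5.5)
Your proof is correct and is the standard argument: pull the list back along $y \mapsto y\cap\la$, observe that each new level at $w$ sits inside the old level at $w\cap\la$, and intersect the resulting branch with $\la$. The paper gives no proof of its own and only cites Fontanella for this fact; the same lifting is what Weiss uses for the analogous statement about $\ITP$ (Fact~\ref{fact:ITPgoesdownwards}).
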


The super tree property (also called ITP), is similar to the strong tree property, except that we have more stringent requirements on the desired branch.

\begin{defn}
	A set $b \subseteq \la$ is an \emph{ineffable branch} through a $\mc{P}_\ka(\la)$-list $d$ if $\{z \in \mc{P}_\ka(\la) \mid b \cap z = d_z\}$ is stationary.
	
	We say $\ITP(\ka,\la)$ holds if every thin $\mc{P}_\ka(\la)$-list has an ineffable branch. We say that \emph{ITP holds at $\ka$}, or that $\ITP(\ka)$ holds, if $\ITP(\ka,\la)$ holds for all $\la \geq \ka$.
\end{defn}

Note that every ineffable branch is cofinal, so $\ITP(\ka)$ implies the strong tree property at $\ka$. Like with the strong tree property, $\ITP(\ka,\la)$ increases in strength as the second coordinate increases:

\begin{fact}\cite[Proposition 3.4]{weiss:combinatorialessence}\label{fact:ITPgoesdownwards}
	Let $\la' > \la$. $\ITP(\mu, \la')$ implies $\ITP(\mu, \la)$.
\end{fact}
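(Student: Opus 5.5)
Given a thin $\mc{P}_\mu(\la)$-list $d = \langle d_z \mid z \in \mc{P}_\mu(\la)\rangle$, I will lift it to a thin $\mc{P}_\mu(\la')$-list $e$. I will then apply $\ITP(\mu,\la')$ to get an ineffable branch $b'$ for $e$, and take $b = b' \cap \la$ as an ineffable branch for $d$. The natural lift is $e_y = d_{y\cap\la}$ for $y \in \mc{P}_\mu(\la')$. This makes sense because $y\cap\la \in \mc{P}_\mu(\la)$ and $d_{y\cap \la} \subseteq y\cap\la \subseteq y$, so $e$ is a $\mc{P}_\mu(\la')$-list.

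\textbf{Thinness.} Fix $y \in \mc{P}_\mu(\la')$ and $y' \supseteq y$. Then
\[ e_{y'}\cap y = d_{y'\cap\la}\cap (y\cap \la), \qquad y\cap\la\subseteq y'\cap\la. \]
So every element of the $e$-level at $y$ lies in the $d$-level $L_{y\cap\la}$, which has size $<\mu$ because $d$ is thin. Hence $e$ is thin, and $\ITP(\mu,\la')$ gives $b'\subseteq\la'$ such that
\[ S' = \{y \in \mc{P}_\mu(\la') \mid b'\cap y = e_y\} \]
is stationary. For $y \in S'$ we have $e_y\subseteq\la$, so $b'\cap y\subseteq \la$, and therefore $b'\cap y = b\cap (y\cap\la) = d_{y\cap\la}$.

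\textbf{Stationarity (the main step).} It remains to check that the projection $S = \{y\cap\la \mid y\in S'\}$ is stationary in $\mc{P}_\mu(\la)$. Every element of $S$ satisfies $b\cap z = d_z$, so this finishes the proof. This is the standard fact that projections of stationary sets are stationary. I would prove it directly as follows. Suppose $C\subseteq\mc{P}_\mu(\la)$ is club. Let $C' = \{y \in \mc{P}_\mu(\la') \mid y\cap\la\in C\}$. Show that $C'$ contains a club:
\begin{itemize}
\item It is closed, since unions of increasing chains commute with intersecting with $\la$ and $C$ is closed.
\item For unboundedness, given $y_0$, alternate between extending the $\la$-part into $C$ and taking unions, for $\omega$ steps. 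The regularity of $\mu$ keeps sizes below $\mu$, and the union of the chain has its $\la$-part a union of an increasing chain in $C$, hence in $C$.
\end{itemize}
So $S'\cap C'\neq\emptyset$, which yields an element of $S\cap C$.

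One subtlety I would handle is the case $\mu\le\omega$ or the convention that clubs in $\mc{P}_\mu(\la)$ are closed under increasing chains of length $<\mu$. I will use the standard Jech definition, with $\mu$ regular uncountable, under which the above argument goes through. If one prefers, one can instead use the Menas characterization (a set contains a club iff it contains all $z$ closed under some $f:[\la]^{<\omega}\to\mc{P}_\mu(\la)$) and extend $f$ to $\la'$ trivially on tuples outside $\la$. Either route gives stationarity of $S$, completing the proof.
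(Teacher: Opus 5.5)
Your argument is correct. The paper gives no proof of its own and simply cites Weiss; your pullback $e_y = d_{y\cap\la}$, followed by projecting the stationary set $S'$ along $y\mapsto y\cap\la$, is the standard proof of this fact. One small simplification: unboundedness of $C'$ needs no $\omega$-step iteration, since if $z_0\in C$ extends $y_0\cap\la$, then $y_0\cup z_0\in C'$ already.
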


In the specific case where $\ka = \la$, it is often convenient to restrict to elements of $\ka$ rather than of $\mc{P}_\ka(\ka)$. The definitions are analogous:

\begin{defn}
	Let $\ka$ be a regular cardinal. A sequence $d = \langle d_\alpha \mid \alpha < \ka\rangle$ is a \emph{$\ka$-list} if $d_\alpha \subseteq \alpha$ for all $\alpha < \ka$. The \emph{$\alpha$-th level}, denoted $L_\alpha$, is the set $\{d_\beta \cap \alpha \mid \alpha < \beta < \ka\}$. A $\ka$-list is \emph{thin} if every level has size $<\ka$. A set $b \subseteq \ka$ is a \emph{cofinal branch} through a list $d$ if $b\cap \alpha \in L_\alpha$ for all $\alpha < \ka$, and is an \emph{ineffable branch} if $\{\alpha < \ka \mid b\cap \alpha = d_\alpha\}$ is stationary in $\ka$.
\end{defn}

Since $\ka$ is club in $\mc{P}_\ka(\ka)$, we have the following easy fact:

\begin{fact}\label{fact:ka-lists(ka,ka)-lists}
	$\TP(\ka,\ka)$ is equivalent to the statement that every thin $\ka$-list has a cofinal branch. Similarly, $\ITP(\ka, \ka)$ is equivalent to the statement that every thin $\ka$-list has an ineffable branch.
\end{fact}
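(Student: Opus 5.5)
The plan is to translate back and forth between $\ka$-lists and $\mc{P}_\ka(\ka)$-lists, using that the set $C = \{z \in \mc{P}_\ka(\ka) \mid z \in \ka\}$ of ordinals below $\ka$ is club in $\mc{P}_\ka(\ka)$. For the clubness I will note that $C$ is unbounded, since any $z$ with $|z|<\ka$ is bounded in the regular $\ka$ and so lies below $\sup z + 1$. It is also closed, since a union of fewer than $\ka$ ordinals below $\ka$ is again an ordinal below $\ka$ by regularity. Throughout I will also use that if $D$ is a club in $\mc{P}_\ka(\ka)$, then $D \cap C$ projects to a club in $\ka$. Conversely, a stationary subset of $\mc{P}_\ka(\ka)$ meets $C$ in a set whose ordinals form a stationary subset of $\ka$, and a stationary subset of $\ka$ is stationary in $\mc{P}_\ka(\ka)$. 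This is the standard correspondence, which I would verify via the characterization of clubs in $\mc{P}_\ka(\ka)$ as closure sets of functions $f\colon [\ka]^{<\w}\to \mc{P}_\ka(\ka)$.

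\emph{From $\TP(\ka,\ka)$ / $\ITP(\ka,\ka)$ to $\ka$-lists.} Let $d=\langle d_\alpha\mid\alpha<\ka\rangle$ be a thin $\ka$-list. I will extend it to a $\mc{P}_\ka(\ka)$-list $e$ by setting $e_z = d_{\sup z}\cap z$ if $\sup z\notin z$, and $e_z=d_{\sup z+1}\cap z$ otherwise. On ordinals this gives $e_\alpha = d_\alpha$. To check thinness, take $z\subseteq y$. Then $e_y\cap z = d_\beta\cap z$ for some $\beta\ge\sup z$, so $e_y \cap z$ is determined by $d_\beta\cap(\sup z+1)$. That set lies in the level $L^d_{\sup z+1}$, or is $d_{\sup z+1}$ itself, so there are fewer than $\ka$ possibilities. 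A cofinal branch $b$ for $e$ then satisfies $b\cap\alpha\in L^e_\alpha$ for each ordinal $\alpha$. By the same computation, $L^e_\alpha$ is $L^d_\alpha$ together with possibly $d_\alpha$. I will need to handle this slight discrepancy: $b\cap\alpha = d_\alpha$ is harmless after passing to $\alpha+1$, since $b\cap(\alpha+1)\in L^e_{\alpha+1}$ gives a $d_\beta$ with $\beta>\alpha$ agreeing with $b$ below $\alpha$. For ineffability, the stationary set of $z$ with $b\cap z=e_z$ intersected with $C$ gives stationarily many $\alpha$ with $b\cap\alpha=d_\alpha$.

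\emph{Converse direction.} Given a thin $\mc{P}_\ka(\ka)$-list $e$, I will restrict it to ordinals to get $d_\alpha=e_\alpha$. Thinness transfers because $L^d_\alpha\subseteq L^e_\alpha$. A cofinal branch $b$ for $d$ gives, for any $z$, the ordinal $\alpha=\sup z+1$ and some $\beta>\alpha$ with $b\cap\alpha = e_\beta\cap\alpha$. Hence $b\cap z = e_\beta\cap z\in L^e_z$. For ineffability, stationarily many $\alpha$ with $b\cap\alpha=e_\alpha$ form a stationary subset of $\mc{P}_\ka(\ka)$ by the correspondence above.

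The main obstacle is the stationarity transfer between $\ka$ and $\mc{P}_\ka(\ka)$, in particular that a stationary subset of $\ka$ remains stationary in $\mc{P}_\ka(\ka)$. I would prove it directly: given a club $D$, build a continuous increasing chain $z_\xi \in D$ of length $\ka$. The ordinals $\xi$ with $z_\xi=\xi$ form a club in $\ka$, which meets the stationary set.
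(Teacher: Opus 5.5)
Your argument is correct and is exactly the reasoning behind the paper's one-line justification (the ordinals below $\ka$ form a club in $\mc{P}_\ka(\ka)$); you simply write out the list translations, the strict-versus-nonstrict level discrepancy, and the stationarity transfer in full. One small slip is in the thinness count for $e$: you also need the case $\beta=\sup z$ (when $\sup y=\sup z\notin y$, e.g.\ $y=z$), which contributes the single extra set $d_{\sup z}\cap z$ and does not affect the bound.
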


The main tool to work with cofinal and ineffable branches is the (thin) $\ka$-approximation property.

\begin{defn}\label{def:approxed}
	Let $\ka$ be regular, $\la$ be an ordinal, and $\PP$ be a forcing notion in a model $V$. A $\PP$-name $\dot{b}$ for a subset of $\la$ is \emph{$\ka$-approximated by $\PP$ over $V$} if for all $z \in (\mc{P}_{\ka}(\la))^V$, $\forces_\PP \dot{b}\cap z \in V$.
	
	A $\PP$-name $\dot{b}$ for a subset of $\la$ is \emph{thinly $\ka$-approximated by $\PP$ over $V$} if it is $\ka$-approximated by $\PP$ over $V$, and furthermore for every $z \in (\mc{P}_{\ka}(\la))^V$, $|\{x \in V \mid \exists p\in \PP \ p \forces_\PP x = \dot{b}\cap z\}|<\ka$.
\end{defn}

\begin{defn}\label{def:ka-approx}
	Let $\ka$ be regular. A forcing $\PP$ has the \emph{$\ka$-approximation property} over a model $V$ if for every ordinal $\la$ and $\PP$-name $\dot{b}$ for a subset of $\la$, if $\dot{b}$ is $\ka$-approximated by $\PP$ over $V$, then $\forces_{\PP}\dot{b} \in V$.
	
	A forcing $\PP$ has the \emph{thin $\ka$-approximation property} over $V$ if for every ordinal $\la$ and every $\PP$-name $\dot{b}$ for a subset of $\la$, if $\dot{b}$ is thinly $\ka$-approximated by $\PP$ over $V$, then $\forces_{\PP}\dot{b} \in V$.
\end{defn}
Note that the $\ka$-approximation property implies the thin $\ka$-approximation property. 

These properties, which which appear in a number of contexts in set theory, are useful here because any cofinal or ineffable branch through a thin $\mc{P}_\ka(\la)$ list is always thinly approximated over $V$, by any forcing.

\begin{fact}\cite{adkisson:ITP}[Lemma 2.8]\label{fact:branchapprox}
	Let $d$ be a thin $\mc{P}_\ka(\la)$ list in $V$, and let $\mathbb{P}$ be a notion of forcing over $V$. Suppose $\dot{b}$ is a $\PP$-name for a cofinal branch through this list. Then $\dot{b}$ is thinly $\ka$-approximated by $\PP$ over $V$.
\end{fact}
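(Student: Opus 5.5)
The plan is to verify the two clauses of Definition~\ref{def:approxed} directly, using only that $\dot{b}$ is forced to be a cofinal branch through the ground-model list $d$. Fix $z \in (\mc{P}_\ka(\la))^V$. Since $\forces_\PP$ ``$\dot{b}$ is a cofinal branch through $d$'', we have $\forces_\PP \dot{b}\cap z \in L_z$. Here $L_z$ means the level computed from $d$ as it sits in $V$, evaluated at this ground-model $z$. One small point needs checking: the statement ``$\dot{b}$ is a cofinal branch'' is interpreted in $V[G]$, where $\mc{P}_\ka(\la)$ may have new elements. Even so, at a ground-model $z$ the level $\{d_y\cap z \mid z\subseteq y\}$ ranges only over the indices $y$ on which $d$ is defined, namely $y\in(\mc{P}_\ka(\la))^V$. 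So $L_z$ is the same set whether computed in $V$ or in $V[G]$. I would fix this reading of the hypothesis at the start.

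For $\ka$-approximation, each element of $L_z$ has the form $d_y\cap z$ with $d, y, z \in V$, so it lies in $V$. Hence $\forces_\PP \dot{b}\cap z\in V$.

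For thinness, suppose $x\in V$ and some $p\in\PP$ satisfies $p\forces x=\dot{b}\cap z$. Then $p\forces x\in \check{L_z}$. Since both $x$ and $L_z$ are in $V$, this membership is absolute, so $x\in L_z$. Therefore the set of possible values of $\dot{b}\cap z$ is contained in $L_z$. Because $d$ is thin in $V$, we get $|L_z|^V<\ka$.

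The step needing care is reading ``$<\ka$'' in Definition~\ref{def:approxed}. That definition is evaluated in $V$, so the cardinality bound $|L_z|^V<\ka$ is exactly what is required. No cardinal-preservation assumption on $\PP$ is needed. Beyond this and the absoluteness of $L_z$ noted above, the argument is routine bookkeeping.
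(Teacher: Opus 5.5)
Your proposal is correct and is the standard argument behind this fact; the paper gives no proof of its own and only cites it from \cite{adkisson:ITP}. For each $z\in(\mathcal{P}_\kappa(\lambda))^V$, every possible value of $\dot{b}\cap z$ lies in the ground-model level $L_z$, which belongs to $V$ and has $V$-cardinality $<\kappa$, so approximation and thinness both follow, and your bookkeeping about reading $L_z$ over the domain of $d$ and evaluating the bound in $V$ is right.
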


If a forcing is split into multiple stages, however, branches are not guaranteed to be approximated by forcings in the intermediate stages. To ensure that they are approximated at every step, we use the following lemma. This was proven for the thin approximation property by Unger \cite[Lemma 7.10]{UngerCF}.

\begin{lemma}\label{lem:cctoapprox}\cite[Lemma 2.9]{adkisson:ITP}
	Suppose that $\dot{b}$ is a $\PP\ast\dot{\Q}$-name for a subset of some ordinal $\la$, which is (thinly) $\ka$-approximated by $\PP\ast \dot{\Q}$ over $V$. If $\PP$ has the $\ka$-cc, then in $V[\PP]$, $\dot{b}$ is still (thinly) $\ka$-approximated by $\Q$.
\end{lemma}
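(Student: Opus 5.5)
The plan is to fix $z \in (\mc{P}_\ka(\la))^V$ and work with a generic $G$ for $\PP$. We must show two things in $V[G]$. First, $\forces_{\Q} \dot{b}\cap z \in V[G]$. Second, in the thin case, the set of possible values of $\dot{b}\cap z$ forced by conditions of $\Q$ has size $<\ka$ in $V[G]$. Note that $z$ is still in $\mc{P}_\ka(\la)^{V[G]}$, and that the $\mc{P}_\ka(\la)^{V[G]}$ in the definition of approximation over $V[G]$ may contain new sets. So the first step is to reduce to ground-model sets. Since $\PP$ is $\ka$-cc, every $z' \in \mc{P}_\ka(\la)^{V[G]}$ is covered by some $z \in \mc{P}_\ka(\la)^V$: take a name for $z'$ of size $<\ka$, and use the $\ka$-cc and the regularity of $\ka$ to bound the possible values of each of its $<\ka$ elements by a set of size $<\ka$ in $V$. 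If $\dot{b}\cap z \in V[G]$, then $\dot{b}\cap z' = (\dot{b}\cap z)\cap z' \in V[G]$. Likewise, the number of possibilities for $\dot b\cap z'$ is at most the number for $\dot b\cap z$. So it suffices to handle ground-model $z$.

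For ground-model $z$, approximation is immediate. By hypothesis, $\forces_{\PP\ast\dot\Q}\dot b\cap z\in V\subseteq V[G]$, so in $V[G]$ every $\Q$-generic extension satisfies $\dot b\cap z \in V \subseteq V[G]$.

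The thin case is the real content. In $V$, let $X_z=\{x\in V\mid \exists (p,\dot q)\ (p,\dot q)\forces x=\dot b\cap z\}$; by thinness $|X_z|^V<\ka$. In $V[G]$, suppose $q\in\Q$ forces $\dot b\cap z = x$ for some $x\in V[G]$. By the previous paragraph, $x\in V$. Choose $p\in G$ and a name $\dot q$ with $p\forces \dot q$ is the check of $q$ and $p \forces_{\PP}$ ``$q\forces_{\Q}\dot b\cap z=\check x$''. Then $(p,\dot q)\forces\dot b\cap z=\check x$, so $x\in X_z$. Hence the set of possible values in $V[G]$ is a subset of $X_z$. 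Because $\PP$ is $\ka$-cc, $\ka$ remains a cardinal in $V[G]$ (it stays regular), so $|X_z|<\ka$ still holds there.

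The main obstacle is the covering step for new elements of $\mc{P}_\ka(\la)^{V[G]}$, together with the care needed to make "forced values" in $V[G]$ correspond to forced values over $V$. Both rely on the $\ka$-cc: the first through bounding antichains, and the second through the preservation of $\ka$ as a cardinal.
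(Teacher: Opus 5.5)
Your proof is correct. The paper itself gives no proof of this lemma; it quotes it from \cite[Lemma 2.9]{adkisson:ITP} and Unger. Your route is the natural and standard one: use the $\kappa$-cc to cover each new $z'\in\mathcal{P}_\kappa(\lambda)^{V[G]}$ by a ground-model $z$, then pull any value of $\dot b\cap z$ forced by $q=\dot q^{G}$ back to a value forced by some $(p,\dot q)$ with $p\in G$.

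Only small fixes are needed:
\begin{itemize}
\item $\dot q$ should be a $\PP$-name with $\dot q^{G}=q$, not a check name, since $q$ need not lie in $V$.
\item Your ``likewise'' bound for $z'$ uses the fact that any $q$ deciding $\dot b\cap z'$ extends to some $q'$ deciding $\dot b\cap z$. Your second paragraph guarantees this, so $y\mapsto y\cap z'$ maps the possible values for $z$ onto those for $z'$.
\item The bound $|X_z|^{V[G]}\le |X_z|^{V}<\kappa$ holds without appealing to preservation of $\kappa$.
\end{itemize}
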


The easiest way to verify approximation properties is the following lemma, due to Unger.

\begin{lemma}\cite[Lemma 2.4]{UngerAtrees}\label{lem:squarecctoapproxprop}
	Let $\ka$ be regular. Let $\PP$ be a forcing such that $\PP\times \PP$ is $\ka$-cc. Then $\PP$ has the $\ka$-approximation property.
\end{lemma}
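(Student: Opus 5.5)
The plan is the standard Unger argument. Take a $\PP$-name $\dot{b}$ for a subset of an ordinal $\la$ that is $\ka$-approximated by $\PP$ over $V$, and suppose toward a contradiction that some condition $p$ forces $\dot{b}\notin V$. The aim is to build, by recursion on $\alpha<\ka$, a family of pairs $(p^0_\alpha,p^1_\alpha)\in\PP\times\PP$ forming an antichain of size $\ka$ in $\PP\times\PP$. That contradicts the $\ka$-cc of $\PP\times\PP$.

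The idea is to split $\dot{b}$ at each stage. Suppose we have reached stage $\alpha$ and have a set $z_\alpha\in(\mc{P}_\ka(\la))^V$. Since $p$ forces $\dot b\cap z_\alpha\in V$ while $\dot b\notin V$, for any extension of $p$ deciding $\dot b\cap z_\alpha$ there must be two further extensions $q^0,q^1$ and an ordinal $\xi\in\la$ with $q^0\forces\xi\in\dot b$ and $q^1\forces\xi\notin\dot b$. Otherwise that condition would decide all of $\dot b$, and $\dot b$ would be in $V$.

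To make the pairs incompatible we proceed as follows.
\begin{enumerate}[(i)]
\item Fix $z_0=\emptyset$. At stage $\alpha$, let $z_\alpha$ be the union of $z_\beta\cup\{\xi_\beta\}$ over $\beta<\alpha$. For $\alpha<\ka$ this has size $<\ka$, since $\ka$ is regular.
\item Extend $p$ to decide $\dot b\cap z_\alpha=x_\alpha$.
\item Find $p^0_\alpha,p^1_\alpha$ below it and $\xi_\alpha\notin z_\alpha$ with $p^0_\alpha\forces\xi_\alpha\in\dot b$ and $p^1_\alpha\forces\xi_\alpha\notin\dot b$. We can take $\xi_\alpha\notin z_\alpha$ because $\dot b\cap z_\alpha$ is already decided there.
\end{enumerate}

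Now check incompatibility for $\beta<\alpha$. Both $p^0_\alpha$ and $p^1_\alpha$ force $\dot b\cap z_\alpha=x_\alpha$, and $\xi_\beta\in z_\alpha$. So $x_\alpha$ decides whether $\xi_\beta\in\dot b$, and it agrees with exactly one of $p^0_\beta,p^1_\beta$.
\begin{itemize}
\item If $\xi_\beta\in x_\alpha$, then $p^1_\beta\perp p^j_\alpha$ for both $j$. Hence $(p^0_\alpha,p^1_\alpha)\perp(p^0_\beta,p^1_\beta)$ via the second coordinate.
\item If $\xi_\beta\notin x_\alpha$, the first coordinates are incompatible in the same way.
\end{itemize}
So the pairs form an antichain of size $\ka$ in $\PP\times\PP$, which is the desired contradiction.

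The main subtlety is step (ii): the splitting must happen below a condition that has already decided $\dot b\cap z_\alpha$. This is where the $\ka$-approximation hypothesis and the regularity of $\ka$ are used, the latter to keep $|z_\alpha|<\ka$.
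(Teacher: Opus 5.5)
Your argument is correct and is the standard antichain construction from Unger's paper, which this paper cites without giving a proof: the splitting pairs are chosen below conditions that decide $\dot b\cap z_\alpha$, and they yield $\ka$ pairwise incompatible elements of $\PP\times\PP$. One small point: regularity of $\ka$ is not actually needed to get $|z_\alpha|<\ka$, since $z_\alpha=\{\xi_\beta\mid\beta<\alpha\}$ has size at most $|\alpha|$.
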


At the successor of a singular cardinal $\nu$, we use the following lemma to show that forcings have the thin $
\nu^+$ approximation property. This is a generalization of the classical branch lemma \cite[Lemma 2.2]{MS_TPSuccSing} of Magidor and Shelah.

\begin{lemma}\cite[Lemma 2.20]{adkisson:ITP} \label{lem:M-S}
	Suppose $\nu$ is a singular strong limit cardinal with cofinality $\tau$. Let $\Q$ be a $\mu^+$-closed forcing over a model $V$ for some $\mu < \nu$ with $\tau \leq \mu$, and let $\PP \in V$ be a poset with $|\PP| <\mu^+$. Then $\Q$ has the thin $\nu^+$-approximation property in the generic extension of $V$ by $\PP$.
\end{lemma}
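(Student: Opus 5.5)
The plan is a Magidor--Shelah style splitting argument. Work in $V[G]$ where $G$ is $\PP$-generic, and let $\dot b$ be a $\Q$-name, say a $\PP\ast\dot\Q$-name, for a subset of an ordinal $\la$ which is thinly $\nu^+$-approximated by $\Q$ over $V[G]$. Suppose toward a contradiction that some condition forces $\dot b\notin V[G]$. Since $\Q$ is $\mu^+$-closed in $V$, and $\PP$ has size $<\mu^+$, the forcing $\Q$ is still $\mu^+$-distributive in $V[G]$. More to the point, every descending sequence of length $\le\mu$ of conditions in $\Q$ lying in $V[G]$ is covered, by a $\PP$-name, by a sequence in $V$, and this suffices to find lower bounds in $V$. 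I will therefore work with $\PP\times\Q$-names and conditions in $V$, handling the $\PP$-part by considering all $|\PP|$ possible first coordinates at once.

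The key construction is a tree of conditions. Since $\dot b$ is forced not to be in $V[G]$, every condition $q$ can be split: there are $q_0,q_1\le q$ and some $\alpha<\la$ with $q_0\forces\alpha\in\dot b$ and $q_1\forces\alpha\notin\dot b$. Because of the $\PP$ part, the splitting has to be done uniformly over the possible generics. Concretely, for each $p\in\PP$ one extends to decide witnesses, and uses the fact that there are at most $|\PP|\le\mu$ such $p$'s. By $\mu^+$-closure of $\Q$ in $V$, one handles these $\le\mu$ tasks one at a time.

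Now fix a cofinal sequence $\langle\nu_i\mid i<\tau\rangle$ in $\nu$ with $\nu_0>\mu$; this is possible since $\tau\le\mu<\nu$. I will build, for each $i<\tau$, a full binary splitting tree of height $\mu$ of conditions in $\Q$. Better, I will build trees of conditions $\langle q_s\mid s\in{}^{<\rho}2\rangle$ with $\rho\le\mu$, with lower bounds at limit stages via closure, of width $2^{\rho}$. The point is to produce more than $\nu$-many (or $\ge\nu^+$-many) pairwise incompatible branches $q_f$, each deciding $\dot b\cap z$ differently for a single $z\in\mc P_{\nu^+}(\la)^V$ of size $\le\nu$. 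Here $z$ is the union of the splitting ordinals, which is where the strong limit hypothesis enters, since $2^{\mu'}<\nu$ for $\mu'<\nu$.

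The main obstacle is that $\Q$ is only $\mu^+$-closed with $\mu<\nu$, so a single splitting tree can only reach height $\mu$ and produce $2^\mu<\nu$ branches, which is not enough to contradict thinness ($<\nu^+$ many possible values). The fix, following the classical branch lemma, is to argue level by level. For each $i<\tau$ build a tree of height $\le\mu$ whose number of incompatible leaves exceeds $\nu_i$ and all leaves of which decide $\dot b\cap z$. Then use a pigeonhole over $\tau\le\mu$ together with the fact that the set of possible values of $\dot b\cap z$ has size some $\kappa'<\nu^+$, that is, size $\le\nu$, hence it is the union of $\tau$ pieces each of size $<\nu$. The leaves of the trees for all $i<\tau$ must all force distinct values of $\dot b\cap z$ (for one fixed $z$ accumulating all splitting points, still of size $<\nu^+$), and doing this simultaneously over all $i$ requires an intertwined construction of length $\tau\le\mu$, using closure once more. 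This yields $\nu^+$-many, or at least more than $\nu$-many, distinct possible values of $\dot b\cap z$ forced by conditions, contradicting thin approximation. Handling the $\PP$-names uniformly so that ``forces distinct values'' survives passage to $V[G]$ is the delicate bookkeeping step. I would handle it by ensuring that for each $p\in\PP$ the pair $(p,q_f)$ decides $\dot b\cap z$, and by noting that distinct $f$ give values differing at a splitting ordinal forced by every $p$ below some dense set.
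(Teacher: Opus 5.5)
The paper does not prove this lemma; it imports it from \cite[Lemma 2.20]{adkisson:ITP} as a generalization of the Magidor--Shelah branch lemma. Measured against the Magidor--Shelah argument, your proposal has a genuine gap, and it sits exactly at the obstacle you identify.

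\textbf{The counting step fails.} Your only source of splitting is binary ($q_0\forces\alpha\in\dot b$, $q_1\forces\alpha\notin\dot b$). With $\mu^+$-closure, a tree of conditions built from binary splits has branches of length at most $\mu$, so it has at most $2^\mu$ leaves. Since $\nu$ is a strong limit, $2^\mu<\nu_i$ for all large $i$. So the step ``for each $i<\tau$ build a tree of height $\le\mu$ with more than $\nu_i$ incompatible leaves'' cannot be carried out; strong limitness blocks it rather than helping. Moreover, even granting such trees, they do not get you past $\nu$. Putting $\tau$ of them side by side, or pigeonholing over a decomposition of the possible values into $\tau$ pieces of size $<\nu$, yields at most $\sup_i\nu_i=\nu$ distinct values of $\dot b\cap z$. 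That is perfectly compatible with thinness, so nothing in your sketch contradicts it.

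\textbf{The missing ingredient.} You need a \emph{wide} splitting property that comes from $\dot b\notin V[G]$, not from closure: for every $q$ below the relevant condition and every $\rho<\nu$, there are $z\in\mc{P}_{\nu^+}(\la)$ and $\rho$ extensions of $q$ forcing pairwise distinct values of $\dot b\cap z$.

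This follows from a narrowness argument. Suppose that for some $q$ and $\rho<\nu$, every $z$ had fewer than $\rho$ possible values below $q$. Since $\dot b\cap z\in V[G]$ is forced, a $\subseteq$-increasing chain of length $\rho^+\le\nu$ produces a $w$ such that restriction to $w$ is injective on the possible values at every $z\supseteq w$. Then $q$ forces that $\dot b$ is computable in $V[G]$ from $\dot b\cap w$, contradicting $\dot b\notin V[G]$.

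With wide splitting in hand, the count that beats $\nu$ is a \emph{product}, not a union. Build one tree of conditions of height $\tau$ that splits $\nu_i$-ways at level $i$.
\begin{itemize}
\item Closure is needed only for sequences of length $\le\tau\le\mu$.
\item Strong limitness gives $\nu^{<\tau}=\nu$, so the union $z^*$ of all the splitting sets still lies in $\mc{P}_{\nu^+}(\la)$.
\item By K\"onig, the $\prod_{i<\tau}\nu_i=\nu^{\tau}>\nu$ branches give more than $\nu$ distinct values of $\dot b\cap z^*$, contradicting thinness.
\end{itemize}
A decomposition of the possible values into $\tau$ pieces of size $<\nu$ could instead drive a diagonalization along a descending sequence of length $\tau$. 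That route also needs more than $\nu_i$ options at stage $i$, so it does not avoid wide splitting.

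\textbf{A secondary error.} You claim that descending sequences of length $\le\mu$ in $V[G]$ have lower bounds. This is false: their union can code the $\PP$-generic. So the construction has to be organized in $V$ with $\PP$-names, and your sketch leaves that bookkeeping undone.
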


To obtain the tree property, Golshani and Hayut's result makes central use of \cite[Theorem 3.10]{NeemanTPNw+1}. This theorem which gives a general class of posets that will force the tree property at the successor of a singular cardinal of countable cofinality.
To obtain the strong tree property, we will use the following variation of that theorem. It is a corollary of \cite[Theorem 3.10]{NeemanTPNw+1}, since the tree property at a cardinal $\kappa$ is equivalent to the statement that every thin $\ka$-list has a cofinal branch; it also follows (even more immediately) from \cite[Theorem 3.14]{adkisson:ITP}, which gives the full strong tree property at $\nu^+$ under these hypotheses.

\begin{lemma}\label{lem:gen2cardstrTP}
	Let $\tau$ be a regular cardinal. Let $\langle \ka_\rho \mid \rho < \tau \rangle$ be an increasing continuous sequence of cardinals above $\tau$ with supremum $\nu$, such that $\ka_{\rho}^+ = \ka_{\rho+1}$ for all $\rho < \tau$. Let $I$ be a subset of $\ka_0$ such that every $\mu \in I$ has cofinality $\tau$. For each $\mu \in I$, let $\LL_\mu$ be the product of forcings $\PP_\mu$ and $\Q_\mu$, where $|\PP_\mu| < \mu^+$ and $\Q_\mu$ is $\mu^{+}$-closed, such that $|\LL_\mu| < \ka_{\rho'}$ for some fixed $\rho' < \tau$.
	In addition, suppose that we have the following:
	\begin{itemize}
		\item $\ka_0$ is indestructibly $\nu^+$-supercompact, with a normal measure $U_0$ on $\mc{P}_{\ka_0}(\nu^+)$ and corresponding embedding $i$ such that $\nu \in i(I)$.
		\item For all ordinals $\rho < \tau$ and all $\la \geq \nu^+$, there is a generic $\la$-supercompactness embedding $j_{\rho+2}$ with domain $V$ and critical point $\ka_{\rho+2}$, added by a poset $\FF$ such that the full support power $\FF^{\ka_\rho}$ is $<\ka_{\rho}$-distributive in $V$.
	\end{itemize}
	Then there exists $\mu \in I$ such that $\TP(\nu^+,\nu^+)$ holds in the extension of $V$ by $\LL_\mu$.
\end{lemma}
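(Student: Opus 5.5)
The strategy is to run the Neeman-style argument: assume the lemma fails, use the supercompactness of $\ka_0$ to produce a single thin list that is a counterexample in a way that can be reflected, and then use the generic embeddings $j_{\rho+2}$ to build a branch anyway. By Fact \ref{fact:ka-lists(ka,ka)-lists}, $\TP(\nu^+,\nu^+)$ in $V[\LL_\mu]$ means that every thin $\nu^+$-list has a cofinal branch.

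\textbf{Step 1: choosing $\mu$ by reflection.} Suppose toward a contradiction that for every $\mu\in I$ there is an $\LL_\mu$-name $\dot d^\mu$ for a thin $\nu^+$-list with no cofinal branch. Apply $i$. Since $\nu\in i(I)$, the sequence $i(\langle \dot d^\mu\mid\mu\in I\rangle)$ evaluated at $\nu$ gives an $i(\LL)_\nu$-name $\dot d$ in $M$ for a branchless thin $\nu^+$-list. Here $i(\LL)_\nu=\PP\times\Q$, where $|\PP|<\nu^+$ and $\Q$ is $\nu^+$-closed in $M$. Because $M$ is closed under $\nu^+$-sequences, $\dot d$ and the forcing lie in $V$ with the same properties, and the statement "some $\mu\in I$ works" reflects down via $i$. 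I will therefore run the argument for this $\mu$, or equivalently transfer the contradiction back through elementarity. This is exactly the trick of Golshani--Hayut/Neeman, and the indestructibility of $\ka_0$ is what lets us treat the $\Q$ part as harmless.

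\textbf{Step 2: building narrow systems.} Fix the name $\dot d$ for a thin $\nu^+$-list in $V[\PP\times\Q]$. For each $\rho<\tau$, use $j_{\rho+2}$ (with $\la=\nu^+$) in $V[\FF]$. Since $\Q$ is $\mu^+$-closed and $|\PP|<\mu^+<\ka_{\rho+2}$, we can lift to the extension, and $j_{\rho+2}(\dot d)$ restricted to $\sup j_{\rho+2}``\nu^+$ yields, for each $\alpha<\nu^+$, a node at a level above $j``\nu^+$. Pulling back gives a system of partial branches indexed by $\nu^+$ of width less than $\nu$: this is the narrow system. The $<\ka_\rho$-distributivity of $\FF^{\ka_\rho}$ ensures that the branches of the system found in the generic extension by the $\ka_\rho$ products are already in the ground model (over $V[\LL_\mu]$), in Neeman's style. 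Then, since $\mathrm{cf}(\nu)=\tau$ and the levels are thin, a pigeonhole argument over $\rho<\tau$ gives one stage where some system branch is cofinal in $\nu^+$ and hence determines a cofinal branch $b$ through $d$, in an extension by $\FF$-type forcing.

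\textbf{Step 3: pulling the branch back.} Finally $b$ must lie in $V[\LL_\mu]$. It is thinly $\nu^+$-approximated by Fact \ref{fact:branchapprox}, and Lemma \ref{lem:M-S} (with $\nu$ strong limit and $\tau\le\mu$) gives the thin $\nu^+$-approximation property of the closed parts over the small parts, while the distributivity hypothesis handles $\FF$. Hence $b\in V[\LL_\mu]$, contradicting branchlessness.

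I expect Step 2 to be the main obstacle: arranging the systems and the branch-preservation for $\FF^{\ka_\rho}$ in full generality is the delicate part. Since the paper notes that the lemma is a corollary of \cite[Theorem 3.10]{NeemanTPNw+1}, I would in practice cite that theorem for Steps 2--3 and verify only that its hypotheses match ours.
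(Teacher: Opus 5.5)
Your closing fallback is the only part of the proposal that proves the lemma, and it is essentially the paper's proof. However, it should be applied to the whole statement, not just ``for Steps 2--3.''

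The paper gives no independent argument. It obtains the lemma from \cite[Theorem 3.10]{NeemanTPNw+1}, which produces $\mu\in I$ with the tree property at $\nu^+$ in $V[\LL_\mu]$. Alternatively, it cites \cite[Theorem 3.14]{adkisson:ITP}, which gives the full strong tree property at $\nu^+$ under these hypotheses.

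The one step you leave out is the translation from trees to lists. Let $d$ be a thin $\nu^+$-list in $V[\LL_\mu]$. Take the pairs $(\alpha,x)$ with $x\in L_\alpha$, ordered by $(\alpha,x)<(\beta,y)$ iff $\alpha<\beta$ and $y\cap\alpha=x$. This is a $\nu^+$-tree with $\alpha$-th level $\{\alpha\}\times L_\alpha$, since $y\in L_\beta$ implies $y\cap\alpha\in L_\alpha$. The union of the second coordinates along a cofinal branch is a cofinal branch of $d$. Fact \ref{fact:ka-lists(ka,ka)-lists} then gives $\TP(\nu^+,\nu^+)$. The reflection through $i$ and the narrow-system argument are internal to Neeman's proof, so there is nothing to do before citing it.

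As an independent argument, your sketch has concrete errors.

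\textbf{Step 1.} The critical point of $i$ is $\ka_0<\nu$. So $i(\langle\dot d^\mu\mid\mu\in I\rangle)(\nu)$ is, in $M$, an $i(\LL)_\nu$-name for a branchless thin $i(\nu^+)$-list, and $i(\nu^+)>\nu^+$. The name trivially lies in $V$. But closure of $M$ under $\nu^+$-sequences does not transfer thinness or branchlessness of this list from $M[i(\LL)_\nu]$ to $V[i(\LL)_\nu]$. Nor does Step 1 single out any $\mu$: $\nu\notin I$ because $I\subseteq\ka_0$, so ``run the argument for this $\mu$'' has no referent. Elementarity merely restates the assumed failure inside $M$ at $i(\nu)^+$, whereas Steps 2--3 are phrased for $\nu^+$ and the embeddings $j_{\rho+2}$ of $V$.

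\textbf{Step 2.} $j_{\rho+2}$ lifts through $\LL_\mu$ simply because $|\LL_\mu|<\ka_{\rho'}\le\ka_{\rho+2}$ for large $\rho$; closure of $\Q_\mu$ is irrelevant. This smallness is not available for $i(\LL)_\nu$, whose size is only bounded by $i(\ka_{\rho'})$.

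\textbf{Step 3.} Lemma \ref{lem:M-S} gives the approximation property for a \emph{closed} forcing, and it needs $\nu$ to be a strong limit, which is not a hypothesis here. The branch, however, is added by $\FF$-type forcing, about which you only know that $\FF^{\ka_\rho}$ is $<\ka_\rho$-distributive. Saying that ``the distributivity hypothesis handles $\FF$'' is exactly the nontrivial core of Neeman's proof. The sketch also never identifies where $\nu\in i(I)$ is genuinely used.

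So delete Steps 1--3 and keep the citation together with the tree-to-list translation.
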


For the super tree property, we will use a similar lemma. The hypotheses are almost identical: the only change is that we require each $\Q_\mu$ to be $\mu^{++}$-closed rather than $\mu^+$-closed. (This assumption, while seemingly mild, is what prevents the arguments in Section \ref{s:manycofs} from immediately generalizing to the super tree property.) We also note that while this lemma gives full ITP, we will only need $\ITP(\nu^+, \nu^+)$ for this paper.

\begin{lemma}\cite[Theorem 4.6]{adkisson:ITP}\label{lem:gen2cardITP}
	Let $\tau$ be a regular cardinal. Let $\langle \ka_\rho \mid \rho < \tau \rangle$ be an increasing continuous sequence of cardinals above $\tau$ with supremum $\nu$, such that $\ka_{\rho}^+ = \ka_{\rho+1}$ for all $\rho < \tau$. Let $I$ be a subset of $\ka_0$ such that every $\mu \in I$ has cofinality $\tau$. For each $\mu \in I$, let $\LL_\mu$ be the product of forcings $\PP_\mu$ and $\Q_\mu$, where $|\PP_\mu| < \mu^+$ and $\Q_\mu$ is $\mu^{++}$-closed, such that $|\LL_\mu| < \ka_{\rho'}$ for some fixed $\rho' < \tau$.
	In addition, suppose that we have the following:
	\begin{itemize}
		\item $\ka_0$ is indestructibly $\nu^+$-supercompact, with a normal measure $U_0$ on $\mc{P}_{\ka_0}(\nu^+)$ and corresponding embedding $i$ such that $\nu \in i(I)$.
		\item For all ordinals $\rho < \tau$ and all $\la \geq \nu^+$, there is a generic $\la$-supercompactness embedding $j_{\rho+2}$ with domain $V$ and critical point $\ka_{\rho+2}$, added by a poset $\FF$ such that the full support power $\FF^{\ka_\rho}$ is $<\ka_{\rho}$-distributive in $V$.
	\end{itemize}
	Then there exists $\mu \in I$ such that $\ITP$ holds at $\nu^+$ in the extension of $V$ by $\LL_\mu$.
\end{lemma}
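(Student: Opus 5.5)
This is cited as \cite[Theorem 4.6]{adkisson:ITP}, so in the paper it is quoted rather than reproved. Still, here is how I would prove it directly, following Neeman's argument for \cite[Theorem 3.10]{NeemanTPNw+1} and its ITP refinement. By Fact~\ref{fact:ITPgoesdownwards} it suffices to prove $\ITP(\nu^+,\la)$ for unboundedly many $\la \geq \nu^+$.

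\emph{Step 1: assume failure for every $\mu$.} Suppose toward a contradiction that for every $\mu \in I$ there is an $\LL_\mu$-name $\dot d^\mu$ for a thin $\mc{P}_{\nu^+}(\la)$-list with no ineffable branch. We may use one $\la$ for all $\mu$, since $|I| < \ka_0$.

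\emph{Step 2: find a branch in a larger extension.} Apply the indestructible supercompactness of $\ka_0$. Lift $i$ through $\LL_\mu$ and look at $i(\dot d)$ evaluated at $\nu$; this is legitimate because $\nu \in i(I)$. Then, for each $\rho$, use the generic embedding $j_{\rho+2}$ with critical point $\ka_{\rho+2}$ together with the point $j_{\rho+2}``\la$. Doing this produces, in an extension by $\LL_\mu \times \FF$, a node $d'_{j``\la}$ of the image list. This node yields a candidate branch $b_\rho = \{\alpha : j(\alpha) \in d'\}$ for which the set $\{z : b_\rho\cap z = d_z\}$ is reflected as stationary.

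The purpose of the $\mu^{++}$-closure of $\Q_\mu$, rather than $\mu^+$-closure, is this stationarity. After lifting, the embedding must absorb the generic for $\Q_\mu$ by a master condition, and that condition must meet the relevant club in $\mc{P}_{\nu^+}(\la)$. This is the point where an ineffable branch needs more than a cofinal one.

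\emph{Step 3: pull the branch back to $V[\LL_\mu]$.} Neeman's pigeonhole argument handles this. Take $\ka_\rho$ many mutually generic copies using $\FF^{\ka_\rho}$, which is $<\ka_\rho$-distributive. Thinness of the levels (size $<\nu^+$) and the pigeonhole principle across the $\tau$ indices $\rho$ show that two generics yield compatible branches. So the branch does not depend on the $\FF$-generic.

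Two facts then bring it into $V[\LL_\mu]$:
\begin{itemize}
\item By Fact~\ref{fact:branchapprox} the branch is thinly $\nu^+$-approximated.
\item By Lemma~\ref{lem:M-S}, applied with $|\PP_\mu|<\mu^+$ and $\Q_\mu$ closed, the relevant quotient has the thin $\nu^+$-approximation property.
\end{itemize}
Hence the branch lies in $V[\LL_\mu]$. Because $\FF$ is distributive enough, stationarity is preserved, so the branch is an ineffable branch through $d^\mu$ in $V[\LL_\mu]$. This contradicts Step 1 for some $\mu$.

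\emph{Main obstacle.} I expect the hardest part to be keeping stationarity intact as the branch is pulled back. A cofinal branch only needs every level to agree, but an ineffable branch needs the set of agreement points to stay stationary through the quotient and the master-condition construction. My first attempt would be to build a sequence of guessing models of size $\mu^+$ and use $\mu^{++}$-closure to obtain a master condition lying below the club.
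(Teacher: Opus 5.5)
The paper only quotes this result from \cite{adkisson:ITP}, so there is no in-paper proof to compare against. As a proof in its own right, your sketch has a genuine gap exactly where the work is.

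\textbf{Step 2.} You use $j=j_{\rho+2}$ as though it reflected the list the way a supercompactness embedding with critical point $\nu^+$ would. It cannot. Here $\mathrm{crit}(j)=\ka_{\rho+2}<\nu$, so $j``\la\cap j(\nu^+)=j``\nu^+$ is not an ordinal: it contains $j(\ka_{\rho+2})$ but not $\ka_{\rho+2}$. Hence $j``\la\notin j(C)$ already for the club $C=\{z\in\mc{P}_{\nu^+}(\la)\mid z\cap\nu^+\in\nu^+\}$, and no stationarity of $\{z\mid b_\rho\cap z=d_z\}$ follows.

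Worse, a thin $\mc{P}_{\nu^+}(\la)$-list in $V[\LL_\mu]$ can have levels of size $\nu>\ka_{\rho+2}$. Then $j(L_z)\neq j``L_z$, so you cannot even conclude $b_\rho\cap z\in L_z$; $b_\rho$ is not known to be a cofinal branch. This mismatch between critical points below $\nu$ and levels of size $\nu$ is the central difficulty at successors of singulars. It is why the Magidor--Shelah/Neeman method extracts from the $j_{\rho+2}$'s only a system of partial branches of small width. It is also why the hypotheses on $\ka_0$, $U_0$ and $\nu\in i(I)$ are needed: through elementarity of $i$ applied to the whole family $\langle\dot d^\mu\mid\mu\in I\rangle$, they select a $\mu$ for which that reduced problem can be solved.

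In your sketch these hypotheses do no identifiable work. ``Lift $i$ through $\LL_\mu$'' and ``evaluate $i(\dot d)$ at $\nu$'' are two different moves, and neither is carried out. As written, your argument would apply equally to every $\mu\in I$, which is a symptom of the missing step. Your explanation of the $\mu^{++}$-closure does not fit either. Since $|\LL_\mu|<\ka_{\rho'}$, for $\rho\geq\rho'$ the embedding $j_{\rho+2}$ lifts to $V[\LL_\mu]$ trivially, and there is no master condition for $\Q_\mu$ to build.

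\textbf{Step 3.} This step does not repair the gap. Lemma \ref{lem:M-S} applied to $\PP_\mu\times\Q_\mu$ at best says that $\Q_\mu$ has the thin $\nu^+$-approximation property over $V[\PP_\mu]$. The quotient that matters is $\FF$ (or $\FF^{\ka_\rho}$) over $V[\LL_\mu]$. About it you only know that $\FF^{\ka_\rho}$ is $<\ka_\rho$-distributive, which is exactly why that hypothesis is phrased via mutually generic copies rather than closure.

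Your counting also does not produce agreement. With $\ka_\rho$ copies and levels of size up to $\nu>\ka_\rho$, neither thinness nor a pigeonhole over the $\tau$ indices forces two generics to give compatible branches. Such a splitting argument only closes once the object has width below $\ka_\rho$, i.e.\ after the system reduction missing from Step 2.

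Finally, the ITP-specific content, the stationarity of $\{z\mid b\cap z=d_z\}$, is the part you explicitly leave open. Your remark that distributivity of $\FF$ ``preserves'' it addresses the wrong issue. Passing stationarity down to $V[\LL_\mu]$ is automatic: ineffability is downward absolute, as the paper itself uses. What needs an argument is producing that stationarity in the first place, and your proposal supplies none.
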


With these general lemmas established, we turn our attention to the specific forcing posets that we will use. As in \cite{golshani-hayut:tpcountablesegment}, our construction primarily consist of a product of Easton collapses. The Easton collapse was introduce by Shioya in \cite{Shioya:eastoncolls}, and has better projection properties than the standard Levy collapse.

\begin{defn}
	A cardinal $\gamma$ is \emph{strong regular} if $\gamma^{<\gamma} = \gamma$.
\end{defn}
\begin{defn}
	The Easton collapse $E(\mu,\nu)$ is the product 
	\[\prod_{\mu\leq \gamma < \nu,\ \gamma \text{ strong regular}} \Coll(\mu,\gamma)\]
	with Easton support.
\end{defn}

We record some properties of this poset.

\begin{lemma}\cite{Shioya:eastoncolls}\label{lem:eastonprops}
	Let $\mu$ be a regular cardinal, and let $\nu$ be Mahlo. Then $E(\mu,\nu)$ is $\mu$-closed, $\nu$-Knaster, and $|E(\mu,\nu)| = \nu$.
\end{lemma}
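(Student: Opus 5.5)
We prove the three properties of Lemma~\ref{lem:eastonprops} separately, in the order closure, size, chain condition, since the chain condition uses the size bound on the factors. Write $E(\mu,\nu)$ as the Easton-support product of the posets $\Coll(\mu,\gamma)$ for strong regular $\gamma\in[\mu,\nu)$. Recall that Easton support means that for every regular (equivalently, every inaccessible) $\delta$, the support of a condition meets $\delta$ in a bounded set. Note also that every support is a set of size $<\nu$, since $\nu$ is regular.

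\textbf{Closure.} Take a descending sequence $\langle p_\xi\mid \xi<\eta\rangle$ with $\eta<\mu$. For each coordinate $\gamma$, the poset $\Coll(\mu,\gamma)$ is $\mu$-closed, so coordinatewise unions give a lower bound. The support of this bound is $\bigcup_\xi \operatorname{supp}(p_\xi)$. We must check that it is still Easton. Fix a regular $\delta$. If $\delta>\mu$, then $\eta<\delta$, so a union of fewer than $\delta$ sets each bounded in $\delta$ is bounded in $\delta$. If $\delta\le\mu$, the support lies above $\mu$ and meets $\delta$ trivially. So $E(\mu,\nu)$ is $\mu$-closed.

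\textbf{Size.} The key observation is that for each $\gamma<\nu$, the strong regularity of $\gamma$ gives $|\Coll(\mu,\gamma)|=\gamma^{<\mu}\le\gamma^{<\gamma}=\gamma<\nu$. Every support is bounded in $\nu$, because $\nu$ is regular (being Mahlo). So a condition is determined by some $\alpha<\nu$ together with an element of $\prod_{\gamma<\alpha}\Coll(\mu,\gamma)$, which has size at most $2^{|\alpha|\cdot\sup_{\gamma<\alpha}\gamma}<\nu$ because $\nu$ is a strong limit. Summing over $\alpha<\nu$ gives $|E(\mu,\nu)|\le\nu$. For the reverse inequality, note that Mahloness makes the inaccessibles in $(\mu,\nu)$ unbounded, and inaccessibles are strong regular, so there are $\nu$ many coordinates.

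\textbf{Knaster property.} This is the main step. Let $\{p_\xi\mid\xi<\nu\}$ be conditions. Since $\nu$ is Mahlo, the set $S$ of inaccessible $\delta<\nu$ is stationary. For $\delta\in S$, Easton support gives $f(\delta):=\sup(\operatorname{supp}(p_\delta)\cap\delta)<\delta$. By Fodor's lemma, $f$ is constant, with value $\beta$, on a stationary $S'\subseteq S$.

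Next, the restrictions $p_\delta\restriction(\beta+1)$ range over a set of size less than $\nu$, by the size computation above. So we may shrink $S'$ to a stationary set on which these restrictions are all equal to a fixed $r$.

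Finally, thin $S'$ to an unbounded set $A$ such that for $\delta<\delta'$ in $A$, $\operatorname{supp}(p_\delta)\subseteq\delta'$. This is possible because each support is bounded in $\nu$, so we can choose the elements of $A$ recursively, each above all supports chosen so far.

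Now take $\delta<\delta'$ in $A$. On coordinates below $\delta'$, $p_{\delta'}$ has support in $\beta+1$, where it agrees with $r$; on coordinates at or above $\delta'$, $p_\delta$ has no support. So the two conditions agree on their common support, and their union is a common extension. Hence the $\nu$ many conditions indexed by $A$ are pairwise compatible, which is the Knaster property. The only delicate point is that the $\Delta$-system step needs both stationarity of the inaccessibles below $\nu$ (for Fodor) and the bound $|\Coll(\mu,\gamma)|<\nu$ (to count restrictions). This is exactly where Mahloness and the restriction to strong regular $\gamma$ are used.
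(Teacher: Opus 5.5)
Your proof is correct. It is the standard argument for Easton-support products at a Mahlo cardinal: coordinatewise lower bounds give closure, counting via the strong limitness of $\nu$ gives the size, and a Fodor/$\Delta$-system argument on the stationary set of inaccessibles below $\nu$ gives the Knaster property. The paper itself gives no proof and simply cites Shioya for this lemma.

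One small remark: the bound $|\Coll(\mu,\gamma)|<\nu$ already follows from $\nu$ being a strong limit. So strong regularity of the coordinates is convenient in your Knaster step but not essential there.
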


We note that even if $\nu$ is not Mahlo, $E(\mu,\nu)$ remains $\mu$-closed.

\begin{lemma}\cite[Lemma 2.5]{golshani-hayut:tpcountablesegment}\label{lem:projections}
	Let $\langle \mu_i \mid i<\zeta\rangle$ and $\langle \nu_i \mid i<\zeta\rangle$ be increasing sequences of regular cardinals, and assume that $\zeta < \mu_0$ and $\mu_i \leq \nu_i < \mu_{i+1}$. Let $\lambda \geq \sup \mu_i$. Then there is a projection from $E(\mu_0, \la)$ onto the full support product $\Pi_{i<\zeta} E(\mu_i,\nu_i)$.
\end{lemma}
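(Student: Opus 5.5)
The plan is to build the projection coordinatewise, using the fact that Easton collapses are products of Levy collapses. We may assume $\la \geq \sup_i \nu_i$, since enlarging $\la$ only adds factors, and $E(\mu_0,\la)$ projects onto $E(\mu_0,\la')$ for $\la'\le\la$ by restriction. The key observation is that for each strong regular $\gamma$ with $\mu_i \leq \gamma < \nu_i$, the collapse $\Coll(\mu_0,\gamma)$ appearing in $E(\mu_0,\la)$ should be used to produce the factor $\Coll(\mu_i,\gamma)$ of $E(\mu_i,\nu_i)$. The intervals $[\mu_i,\nu_i)$ are pairwise disjoint because $\nu_i < \mu_{i+1}$. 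Hence each strong regular $\gamma < \la$ is responsible for at most one target factor, and different $i$ use disjoint sets of coordinates of $E(\mu_0,\la)$.

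\textbf{Step 1.} I need a projection $\pi_\gamma : \Coll(\mu_0,\gamma) \to \Coll(\mu_i,\gamma)$ whenever $\mu_0 \leq \mu_i \leq \gamma$ and $\gamma$ is strong regular. The standard fact is that $\Coll(\mu_0,\gamma)$ forces $|\gamma| = \mu_0$. Also, $\Coll(\mu_i,\gamma)$ is $\mu_i$-closed of size $\gamma^{<\mu_i} = \gamma$, and it collapses $\gamma$ to $\mu_i$. By a McAloon/absorption-type argument, $\Coll(\mu_0,\gamma) \cong \Coll(\mu_0,\gamma)\times \Coll(\mu_i,\gamma)$ densely; in particular, $\Coll(\mu_0,\gamma)$ projects onto $\Coll(\mu_i,\gamma)$. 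The cleanest route is to quote the known lemma that $\Coll(\mu,\gamma)$ absorbs any $\mu$-closed poset of size $\le\gamma$ that collapses $\gamma$ to $\mu$, noting that $\Coll(\mu_i,\gamma)$ is $\mu_0$-closed. Strong regularity of $\gamma$ is exactly what controls the sizes here.

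\textbf{Step 2.} Define $\pi$ on $p \in E(\mu_0,\la)$ by
\[\pi(p) = \Big\langle \big\langle \pi_\gamma(p(\gamma)) \mid \gamma \in [\mu_i,\nu_i) \text{ strong regular}\big\rangle \;\Big|\; i<\zeta\Big\rangle,\]
where $\pi_\gamma(\emptyset)=\emptyset$. Easton support of $p$ gives Easton support of each $i$-th component, since the supports are restrictions of $\operatorname{supp}(p)$ to intervals whose regular points are the same. The target is a full support product, so there is no issue with assembling the components. Order preservation follows coordinatewise.

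\textbf{Step 3.} This is the main obstacle: verifying the density clause of a projection. Given $p$ and $q \le \pi(p)$ in $\prod_i E(\mu_i,\nu_i)$, I must find $p' \le p$ with $\pi(p') \le q$. Coordinatewise I pick $p'(\gamma) \le p(\gamma)$ with $\pi_\gamma(p'(\gamma)) \le q_i(\gamma)$ on $\bigcup_i \operatorname{supp}(q_i)$, and set $p'(\gamma)=p(\gamma)$ elsewhere. The worry is whether $\operatorname{supp}(p')$ is still Easton. It is the union of $\operatorname{supp}(p)$ and $\zeta$ many Easton sets living in disjoint intervals. For a regular $\delta$, the intersection $\operatorname{supp}(p')\cap\delta$ is bounded unless one of the pieces is unbounded below $\delta$. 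Each piece is Easton, so an unbounded piece forces $\delta$ to be a limit of the $\mu_i$. In that case $\delta \ge \sup_{i<\eta}\mu_i$ with $\eta$ limit, and $\cf(\delta)\le|\eta|\le\zeta<\mu_0\le\delta$, contradicting regularity. The only exception would be $\delta=\sup$ being regular, which is impossible since $\zeta<\mu_0$ and the $\mu_i$ are increasing. Therefore $p'$ is Easton, which completes the proof.
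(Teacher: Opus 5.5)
The core of your argument is right, but the opening reduction runs in the wrong direction. Restricting coordinates projects $E(\mu_0,\la)$ onto $E(\mu_0,\la')$ only for $\la'\le\la$. So a projection out of a larger $E(\mu_0,\la^*)$ with $\la^*\ge\sup_i\nu_i$ tells you nothing about $E(\mu_0,\la)$ when $\la<\sup_i\nu_i$.

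No argument could cover that case anyway. When $\zeta$ is a successor, the statement with only $\la\ge\sup_i\mu_i$ is false in general. Take $\zeta=1$, $\la=\mu_0<\nu_0$ and $\mu_0^{<\mu_0}=\mu_0$. Then $E(\mu_0,\la)$ is the trivial poset, while $E(\mu_0,\nu_0)$ has the atomless factor $\Coll(\mu_0,\mu_0)$, and a trivial poset cannot project onto an atomless one.

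So instead of reducing, read the hypothesis as $\la\ge\sup_i\nu_i$. This is automatic when $\zeta$ is a limit, since then $\sup_i\mu_i=\sup_i\nu_i$, and it is what holds in the paper's applications ($\la=\sup(S)$ or $\la=t(\xi_{n+1})$).

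With that hypothesis, the rest of your proposal is correct and is the expected argument; the paper itself gives no proof, only the citation to Golshani--Hayut.

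\begin{itemize}
\item For strong regular $\gamma\in[\mu_i,\nu_i)$ with $i>0$, the product $\Coll(\mu_0,\gamma)\times\Coll(\mu_i,\gamma)$ is $\mu_0$-closed and has size $\gamma$ because $\gamma^{<\gamma}=\gamma$.
\item It also collapses $\gamma$ to $\mu_0$, so it is equivalent to $\Coll(\mu_0,\gamma)$. This gives $\pi_\gamma$, at least on a dense set, which is harmless.
\item The coordinatewise map is then a projection once the supports glue to an Easton set.
\end{itemize}

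In that gluing check, the sentence ``an unbounded piece forces $\delta$ to be a limit of the $\mu_i$'' conflates a single support with the union of all of them. A single Easton piece is never unbounded below a regular $\delta$.

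The clean version goes as follows. Only regular $\delta>\mu_0$ matter. Each of the fewer than $\delta$ sets $\operatorname{supp}(p)$ and $\operatorname{supp}(q_i)$, $i<\zeta$, is bounded in $\delta$, either by the Easton condition or because $\operatorname{supp}(q_i)\subseteq\nu_i<\delta$. Hence their union is bounded, since $|\zeta|<\mu_0<\delta=\cf(\delta)$. Your observation that limits $\sup_{i<\eta}\mu_i$ with $\eta\le\zeta$ a limit ordinal are singular is the same use of $\zeta<\mu_0$, just phrased less directly.
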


\begin{lemma}\cite[Lemma 2.6]{golshani-hayut:tpcountablesegment}\label{lem:liftingembeddings}
	Let $\ka$ be indestructibly supercompact and let $\mu<\ka$ regular. Let $\PP_0,$ and $\PP_1$ be forcing notions, where $|\PP_0| < \mu$, $\PP_0$ is $\mu$-cc, and $\PP_1$ is $\ka$-directed closed. Then in the generic extension by $\PP_0 \times E(\mu,\ka)\times \PP_1$, $\ka$ is generically supercompact by a poset $\R \in V$ that is $\mu$-closed in the generic extension.
\end{lemma}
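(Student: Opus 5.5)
The plan is to build the generic embedding by lifting a supercompactness embedding of $V$ through $\PP_0$, $E(\mu,\ka)$ and $\PP_1$ in turn. Fix $\la\geq\ka$ and let $G_0\times G_E\times G_1$ be generic for $\PP_0\times E(\mu,\ka)\times\PP_1$. Using indestructibility of $\ka$, take a $\la'$-supercompactness embedding $j\colon V\to M$ with critical point $\ka$, for some $\la'\geq\max(\la,|\PP_1|)$, so that ${}^{\la'}M\subseteq M$. I would then compute the image of each factor.

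\textbf{The small factor.} Since $|\PP_0|<\mu<\ka$, we may assume $\PP_0\in V_\ka$, so $j(\PP_0)=\PP_0$ and $j$ fixes it pointwise. Hence $G_0$ is already $j(\PP_0)$-generic over $M$, and $j``G_0=G_0$.

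\textbf{The Easton factor.} By definition, $j(E(\mu,\ka))=E(\mu,j(\ka))^M$. Because the supports are Easton and $\ka$ is inaccessible, this splits as $E(\mu,\ka)\times E(\mu,[\ka,j(\ka)))^M$, where the second factor is the Easton-support product of the collapses $\Coll(\mu,\gamma)$ for strong regular $\gamma\in[\ka,j(\ka))$ in $M$. Conditions in $E(\mu,\ka)$ have bounded support below $\ka$, so $j$ fixes them and $j``G_E=G_E$. Thus it suffices to force with the tail $E_{\mathrm{tail}}=E(\mu,[\ka,j(\ka)))^M$; this gives $j(G_E)$ as $G_E\times H_E$ for the tail generic $H_E$.

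\textbf{The closed factor and the choice of $\R$.} The set $j``G_1$ lies in $M[G_0\times G_E\times H_E]$. It is a directed subset of $j(\PP_1)$ of size at most $|\PP_1|<j(\ka)$, and $j(\PP_1)$ is $j(\ka)$-directed closed there, so it has a lower bound $m$, the master condition. Forcing with $j(\PP_1)$ below $m$ gives $H_1\ni m$, and then $j$ lifts to $j\colon V[G_0][G_E][G_1]\to M[G_0][G_E\times H_E][H_1]$. The $\la$-supercompactness of the lift follows from $j``\la\in M$, the closure of $M$, and the fact that the tail forcing is $\mu$-closed while the combined forcing adds no problematic $\la$-sequences. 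So the candidate is $\R=E_{\mathrm{tail}}\times j(\PP_1)$, which lies in $M\subseteq V$.

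The first obstacle is that the master condition $m$ depends on $G_1$, while $\R$ must lie in $V$. I would handle this with the standard absorption observation. Forcing with $E_{\mathrm{tail}}\times j(\PP_1)$ over $V[G]$ yields a generic through some condition, and by homogeneity and genericity one may take a generic below $m$, i.e.\ work with $\R\restriction m$. Since $j(\PP_1)$ is $\ka$-directed closed, forcing below $m$ over $V[G]$ is a dense, equivalent restriction of the same closed poset. If needed, one can instead let $\R$ be the term forcing and appeal to Fact~2.x-type arguments. At worst, I would verify that the generic-supercompactness conclusion only needs a $\R$-generic containing $m$, which can be arranged inside the $\R$-extension.

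The second obstacle, and the main one, is showing that $\R$ is $\mu$-closed in $V[G_0\times G_E\times G_1]$. In $V$, $\R$ is $\mu$-closed: $E_{\mathrm{tail}}$ is $\mu$-closed in $M$ and ${}^{<\mu}M\subseteq M$, and $j(\PP_1)$ is even more closed. Forcing with $E(\mu,\ka)\times\PP_1$, which is $\mu$-closed, adds no new ${<}\mu$-sequences, so $\R$ stays $\mu$-closed there. The delicate part is $\PP_0$: a small $\mu$-cc forcing can add new short sequences of conditions. I would argue that any such sequence is covered by a ground-model sequence, using $|\PP_0|<\mu$ and the $\mu$-cc to decide each term into fewer than $\mu$ possibilities, and then use the closure of $\R$ in $V$ (or its directed closure) to find lower bounds. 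If full closure fails, I would fall back to the $\mu$-strategic closure or $\mu$-distributivity that this argument gives, which is what the later applications actually use.
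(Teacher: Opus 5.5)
Your computations for the small factor ($j(\PP_0)=\PP_0$) and the Easton factor ($j(E(\mu,\ka))\cong E(\mu,\ka)\times E(\mu,[\ka,j(\ka)))^M$) are right. The treatment of $\PP_1$, however, fails, and the failure is in the approach itself, not a detail. You start from $j\colon V\to M$, which uses no indestructibility at all, and try to lift it through $\PP_1$ with a master condition. In general no such lift exists. Take $\PP_1=\mathrm{Add}(\ka,1)$. Its conditions lie in $V_\ka$, so the pointwise image $j[G_1]$ is just $G_1$. By elementarity, a lifted $j(G_1)\subseteq \mathrm{Add}(j(\ka),1)^M$ must be a filter containing $j[G_1]$, and it must contain some $p\in M$ with $\ka\subseteq\operatorname{dom}(p)$. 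Since $p$ is compatible with all of $G_1$, we get $p\uhr\ka=\bigcup G_1$, so $\bigcup G_1\in M\subseteq V$, which is absurd.

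So the real problem in your ``first obstacle'' is not that $m$ depends on $G_1$; it is that $m$ cannot exist. Your claims that $j[G_1]\in M[G_0\times G_E\times H_E]$ and that $j(\PP_1)$ is still $j(\ka)$-directed closed there are unfounded.

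The missing idea is the intended use of indestructibility, which lets you avoid lifting through $\PP_1$ at all. Since $\ka$ is still supercompact in $V[G_1]$, take $j\colon V[G_1]\to M$ there. Because $\PP_1$ adds no ${<}\ka$-sequences, $\PP_0$ and $E(\mu,\ka)$ are the same posets in $V[G_1]$, and $G_0\times G_E$ is generic over $V[G_1]$. Your two computations then lift $j$ to $V[G_1][G_0][G_E]$ using only a generic $H$ for $\R=E(\mu,[\ka,j(\ka)))^M$, with no master condition. This $\R$ is $\mu$-closed in $M$, hence in $V[G_1]$ (by the closure of $M$ there), hence in $V[G_1][G_E]$ (since $E(\mu,\ka)$ adds no ${<}\mu$-sequences). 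It lives in $M\subseteq V[G_1]$. If you want a poset literally in $V$, use $\Coll(\mu,\theta)^V=\Coll(\mu,\theta)^{V[G_1]}$: in $V[G_1]$ it projects onto $\R$ for any $\theta\geq|\R|$ with $\theta^{<\mu}=\theta$.

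Your plan for closure after $\PP_0$ also fails as stated. The fewer than $\mu$ possible values of each term are in general pairwise incompatible, so covering by ground-model sequences yields no lower bounds. In fact, no atomless $\R$ that is $\mu$-closed in $V[G_1][G_E]$ can remain $\mu$-closed once a nontrivial $\PP_0$ is added. Let $x\subseteq\delta<\mu$ be new, with $\delta$ least. In $V[G_1][G_E]$, build a binary splitting tree $\langle r_s\mid s\in 2^{<\delta}\rangle$ of conditions of $\R$ and thread $x$ through it. A lower bound for $\langle r_{x\uhr\xi}\mid \xi<\delta\rangle$ would define $x$ in $V[G_1][G_E]$, which is impossible. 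What you can prove is $\mu$-closure in $V[G_1][G_E]$, plus ${<}\mu$-distributivity in the full extension by Easton's lemma, since $\PP_0$ is $\mu$-cc. That is your fallback, and it should be presented as the conclusion rather than as a contingency.
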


Combining Lemma \ref{lem:liftingembeddings} with Lemma \ref{lem:gen2cardITP}, we conclude:

\begin{prop}\label{prop:itpaftereastoncolls_withaux}
	Let $\langle \ka_n \mid n<\w\rangle$ be an $\w$-sequence of indestructibly supercompact cardinals with supremum $\nu$.
	For all strong regular $\la > \sup(S)$, define
	\[\PP_\la =  \prod_{n<\w}E(\ka_n^{++},\ka_{n+1}) \times E(\nu^{++}, \sup(S))\times \Coll(\nu^+,\la),\]
	and for each $\rho < \ka_0$, let
	\[\LL_\rho = \Coll(\w,\rho) \times \Coll(\rho^{++}, <\ka_0).\]
	Then for all strong regular $\la > \sup(S)$, there is $\rho < \ka_0$ such that
	%for all strong regular $\la \geq \sup(S)$,
	$\ITP(\nu^+)$ holds in the generic extension of $V[\PP_\la]$ by $\LL_\rho$.
\end{prop}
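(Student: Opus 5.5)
We plan to apply Lemma \ref{lem:gen2cardITP} in the model $W = V[\PP_\la]$, with $\tau = \w$. The index set $I$ will be the set of ordinals $\rho<\ka_0$ of cofinality $\w$ (say, singular cardinals of countable cofinality), and $\LL_\rho = \Coll(\w,\rho)\times\Coll(\rho^{++},<\ka_0)$ will be split as $\PP_\rho = \Coll(\w,\rho)$ and $\Q_\rho = \Coll(\rho^{++},<\ka_0)$. Then $|\PP_\rho| = \rho^{\aleph_0}$, which is small compared to $\rho^+$ once we restrict $I$ to strong limits (or we absorb this into the set $I$ chosen via the measure). Also $\Q_\rho$ is $\rho^{++}$-closed, and $|\LL_\rho|\le\ka_0$. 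The sequence $\langle\ka_\rho\mid\rho<\w\rangle$ of the lemma is not our $\langle\ka_n\rangle$ itself. Instead we take $\ka'_0=\ka_0$ and $\ka'_{n+1}=$ the cardinal $\ka_n^{+}$, $\ka_n^{++}$ etc.\ as appropriate. After forcing with $\prod_n E(\ka_n^{++},\ka_{n+1})$ the cardinals between $\ka_n^{++}$ and $\ka_{n+1}$ are collapsed, so in $W[\LL_\rho]$ the cardinals below $\nu$ from $\ka_0$ on are $\ka_0,\ka_0^+,\ka_0^{++},\ka_1,\ka_1^+,\dots$. This gives an $\w$-sequence that is increasing, continuous, and has successor steps, with the supercompacts $\ka_{n+1}$ appearing at index $3n+3$. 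We reindex so that each "$\ka_{\rho+2}$" in the lemma's hypotheses is one of the original supercompacts (or pad the sequence so that it is).

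The first hypothesis to verify is that $\ka_0$ is indestructibly $\nu^+$-supercompact in $W$ with $\nu\in i(I)$. The forcing $\PP_\la$ is $\ka_0$-directed closed, since its first factor $E(\ka_0^{++},\ka_1)$ is $\ka_0^{++}$-directed closed and all later factors are more closed. Indestructibility therefore preserves the supercompactness of $\ka_0$, and we may assume it remains indestructible for further $\ka_0$-directed closed forcing. For the measure, $\nu$ has cofinality $\w$ and is a strong limit in $W$, so by elementarity $\nu\in i(I)$ in the ultrapower, since $i(I)$ is the set of such cardinals below $i(\ka_0)$.

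The second hypothesis, the generic supercompactness embeddings with critical point each $\ka_{n+1}$, is the heart of the matter and uses Lemma \ref{lem:liftingembeddings}. For a fixed $n$ we rewrite $\PP_\la$ as $\PP_0\times E(\ka_n^{++},\ka_{n+1})\times\PP_1$. Here $\PP_0=\prod_{m<n}E(\ka_m^{++},\ka_{m+1})$ has size $<\ka_n^{++}$ (given $\ka_n$ Mahlo) and is $\ka_n^{++}$-cc, which we get from Lemma \ref{lem:eastonprops} plus small size. The remaining factor $\PP_1=\prod_{m>n}E(\ka_m^{++},\ka_{m+1})\times E(\nu^{++},\sup S)\times\Coll(\nu^+,\la)$ is $\ka_{n+1}$-directed closed. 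Lemma \ref{lem:liftingembeddings} with $\mu=\ka_n^{++}$ then gives a generic supercompactness embedding for $\ka_{n+1}$, added by some $\R\in V$ that is $\ka_n^{++}$-closed in $W$. The lemma instead demands that the full support power $\R^{\ka_{n}'}$ be $<\ka'_{n}$-distributive, where $\ka'_n$ is two steps below in the new indexing, i.e.\ $\ka_n^{+}$ or $\ka_n$. This follows because a full support power of a $\ka_n^{++}$-closed poset of length $\le\ka_n^+$ is still $\ka_n^{++}$-closed, hence distributive enough. The embedding must also be generic over $W$, not merely $V$; we need this to interact correctly with the lemma's domain "$V$", here read as $W$.

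The main obstacle will be the bookkeeping: matching the indexing of the lemma (critical point $\ka_{\rho+2}$, distributivity at $\ka_\rho$) with the collapse structure, and confirming $|\LL_\rho|<\ka_{\rho'}$ together with $|\PP_\rho|<\rho^+$. If the latter fails for arbitrary $\rho$, we shrink $I$ to $\rho$ with $\rho^{\aleph_0}=\rho^+$, i.e.\ handle $\Coll(\w,\rho)$ as having size $\rho^{\w}$ and use that Lemma \ref{lem:M-S}-type arguments only need $|\PP|<\mu^{+}$ for the relevant $\mu$. Once these checks are done, Lemma \ref{lem:gen2cardITP} yields some $\rho\in I$ with $\ITP(\nu^+)$ in $W[\LL_\rho]$.
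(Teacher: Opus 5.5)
Your overall route is the paper's. There the proposition is obtained in one line by applying Lemma \ref{lem:gen2cardITP} over $W=V[\PP_\la]$, with $\LL_\rho$ split as $\Coll(\w,\rho)\times\Coll(\rho^{++},<\ka_0)$, $\ka_0$ kept indestructible because $\PP_\la$ is $\ka_0$-directed closed, and the generic embeddings supplied by Lemma \ref{lem:liftingembeddings}. Your checks of directed closure, of $\nu\in i(I)$, of $|\LL_\rho|\le\ka_0$, and of the closure of $\R$ are fine. However, the bookkeeping step you defer is resolved incorrectly.

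\textbf{The indexing.} Suppose the lemma's sequence is the list of consecutive cardinals $\ka_0,\ka_0^+,\ka_0^{++},\ka_1,\ka_1^+,\dots$ of $W$. Then the second hypothesis asks for a generic supercompactness embedding with critical point $\ka'_{\rho+2}$ for \emph{every} $\rho<\w$, so also at $\ka_0^{++},\ka_1^+,\ka_1^{++},\dots$. Lemma \ref{lem:liftingembeddings} only produces embeddings for cardinals that are indestructibly supercompact in $V$, and nothing in the hypotheses gives them at these intermediate successors. Neither reindexing nor padding helps: any sequence starting at $\ka_0$ with $\ka'_{\rho+1}=(\ka'_\rho)^+$ must enumerate every cardinal in $[\ka_0,\nu)$, so those indices are unavoidable.

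The paper instead takes the lemma's sequence to be the supercompacts $\ka_n$ themselves, as the shared notation indicates. For critical point $\ka_{n+2}$, apply Lemma \ref{lem:liftingembeddings} with:
\begin{enumerate}
\item $\mu=\ka_{n+1}^{++}$;
\item $\PP_0=\prod_{m\le n}E(\ka_m^{++},\ka_{m+1})$, which has size $\ka_{n+1}<\mu$ and so is $\mu$-cc;
\item the factor $E(\ka_{n+1}^{++},\ka_{n+2})$;
\item the rest of $\PP_\la$ as the $\ka_{n+2}$-directed closed $\PP_1$.
\end{enumerate}
The resulting $\R$ is $\ka_{n+1}^{++}$-closed in $W$. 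So every full support power of it, in particular $\R^{\ka_n}$, is $\ka_{n+1}^{++}$-closed and hence $<\ka_n$-distributive, which is exactly what is required. This is your own computation shifted by one index.

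Your unease does have a real source. The clause $\ka_\rho^+=\ka_{\rho+1}$, as the lemma is quoted, does not literally hold for the supercompacts in $W$, where $\ka_{n+1}=\ka_n^{+++}$, and the paper's one-line proof passes over this. But switching to consecutive cardinals trades a mismatch in the quoted statement for a generic-supercompactness hypothesis you have no way to verify.

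\textbf{The size of $\Coll(\w,\rho)$.} It is $\rho$, not $\rho^{\aleph_0}$, since conditions are finite functions. So $|\PP_\rho|<\rho^+$ holds for every $\rho\in I$, and no shrinking of $I$ is needed. Your proposed remedies would not have worked anyway. For $\cf(\rho)=\w$, K\"onig's theorem gives $\rho^{\aleph_0}\ge\rho^+$ even when $\rho$ is a strong limit, and $\rho^{\aleph_0}=\rho^+$ still violates $|\PP_\rho|<\rho^+$.
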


This proposition is actually stronger than we will need; we will only require the one-cardinal version, $\ITP(\nu^+, \nu^+)$, to hold in this extension.

In addition to Lemma \ref{lem:projections}, we will also use the following standard absorption lemma for Levy collapse posets.
\begin{lemma}\label{lem:colabsorption}\cite[Theorem 14.3]{cummings:handbook}
	Let $\ka$ be an inaccessible cardinal, and let $\delta < \ka$ be regular. Let $\PP$ be a $\delta$-closed forcing poset with $|\PP| < \ka$. Then there is a forcing projection from $\Coll(\delta, <\ka)$ to $\PP$ whose quotient is $\Coll(\delta, <\ka)$.
\end{lemma}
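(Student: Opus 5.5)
The plan is to prove the absorption by the standard route: first reduce to a single collapse $\Coll(\delta,\gamma)$ for a suitable $\gamma<\ka$, then use the usual characterization of $\Coll(\delta,\gamma)$ among $\delta$-closed posets of size $\gamma$.

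\textbf{Step 1: choosing $\gamma$ and the characterization.} Since $\ka$ is inaccessible and $|\PP|<\ka$, pick $\gamma<\ka$ with $|\PP|\le\gamma$, $\delta\le\gamma$ and $\gamma^{<\delta}=\gamma$; for instance, let $\gamma = 2^{\beta}$ where $\beta=\max(|\PP|,\delta)$. The key tool is the following standard fact (McAloon-style). If $\mathbb{S}$ is separative, $\delta$-closed, has size $\gamma=\gamma^{<\delta}$, and forces $|\gamma|=\delta$, then $\mathbb{S}$ has a dense subset isomorphic to a dense subset of $\Coll(\delta,\gamma)$. Hence $\mathbb{S}$ and $\Coll(\delta,\gamma)$ are forcing equivalent. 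I will prove this by building a dense subtree $T\subseteq\mathbb{S}$ of height $\delta$ by recursion on levels. Each node gets $\gamma$ many pairwise incompatible extensions, and these are chosen to decide the next value of a name for a surjection $\delta\to\gamma$. At limit levels below $\delta$ we take lower bounds, which exist by $\delta$-closure. The hypothesis $\gamma^{<\delta}=\gamma$ keeps each level of size $\gamma$. Density follows because every condition is refined along some branch, since the collapse name eventually codes it. The resulting tree is isomorphic to the tree of sequences of length $<\delta$ from $\gamma$, which is dense in $\Coll(\delta,\gamma)$. \emph{This characterization step is the main technical obstacle.} The delicate point is arranging density of $T$. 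What I would try first is to enumerate $\mathbb{S}$ in type $\gamma$ and, at each successor level, make sure the $\alpha$-th condition is either compatible with a chosen extension or incompatible with the node.

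\textbf{Step 2: single absorption.} Apply Step 1 to $\mathbb{S}=\PP\times\Coll(\delta,\gamma)$, taking separative quotients if needed. This poset is $\delta$-closed, has size $\gamma$, and collapses $\gamma$ to $\delta$. Hence $\PP\times\Coll(\delta,\gamma)$ is forcing equivalent to $\Coll(\delta,\gamma)$. Composing this equivalence with the projection of the product onto its first coordinate yields a projection from (a dense subset of) $\Coll(\delta,\gamma)$ onto $\PP$. Its quotient in $V[G_\PP]$ is $\Coll(\delta,\gamma)^V$.

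\textbf{Step 3: the full Levy collapse.} Factor $\Coll(\delta,<\ka)\cong\Coll(\delta,<\gamma^+)\times\Coll(\delta,[\gamma^+,\ka))$. The first factor is $\delta$-closed, has size $\gamma^{<\delta}=\gamma$, and collapses $\gamma$. By Step 1 it is therefore equivalent to $\Coll(\delta,\gamma)$, and by Step 2 it projects onto $\PP$ with quotient $\Coll(\delta,\gamma)^V$. So in $V[G_\PP]$ the quotient of $\Coll(\delta,<\ka)$ is
\[\Coll(\delta,\gamma)^V\times\Coll(\delta,[\gamma^+,\ka))^V.\]
Since $\PP$ is $\delta$-closed, it adds no new sequences of length $<\delta$. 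Thus $\Coll(\delta,\alpha)^V=\Coll(\delta,\alpha)^{V[G_\PP]}$ for every $\alpha$, and $\ka$ remains inaccessible in $V[G_\PP]$ because $|\PP|<\ka$. Working in $V[G_\PP]$, the same regrouping as above shows that this product is equivalent to $\Coll(\delta,<\gamma^+)\times\Coll(\delta,[\gamma^+,\ka))\cong\Coll(\delta,<\ka)$. Here the regrouping uses Step 1 again, and $\gamma^{<\delta}=\gamma$ still holds in $V[G_\PP]$. This gives the desired projection with quotient $\Coll(\delta,<\ka)$.
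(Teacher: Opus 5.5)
Your overall route is the standard argument behind the result the paper cites from Cummings; the paper itself gives no proof. That route is McAloon's characterization of $\Coll(\delta,\gamma)$, then absorbing $\PP$ via $\PP\times\Coll(\delta,\gamma)\simeq\Coll(\delta,\gamma)$, then factoring the Levy collapse. Steps 1--2 are fine in outline, but \textbf{Step 3 rests on a false claim and needs repair}.

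The factor $\Coll(\delta,<\gamma^+)$ has $\gamma^+$ many coordinates, so its size is $(\gamma^+)^{<\delta}=\gamma^+$, not $\gamma$. Step 1 therefore does not apply to it at $\gamma$. It cannot apply at $\gamma^+$ either: $\Coll(\delta,<\gamma^+)$ is $\gamma^+$-cc, so it does not collapse $\gamma^+$. In fact $\Coll(\delta,<\gamma^+)$ is not equivalent to $\Coll(\delta,\gamma)$ at all. Its Boolean completion has no dense subset of size $\gamma$, because each condition mentions fewer than $\delta$ of the $\gamma^+$ coordinates. Consequently the quotient you compute, $\Coll(\delta,\gamma)^V\times\Coll(\delta,[\gamma^+,\ka))^V$, is unjustified: the coordinates in $(\gamma,\gamma^+)$ have silently vanished. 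Your closing ``regrouping'' reuses the same false equivalence.

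The repair is easy and removes the need for the second half of Step 3. Split off only the single coordinate $\gamma$, writing $\Coll(\delta,<\ka)\cong\Coll(\delta,\gamma)\times R$, where $R$ is the $<\delta$-support product of the remaining coordinates. By Step 2 this is equivalent to $\PP\times\Coll(\delta,\gamma)\times R$. So $\Coll(\delta,<\ka)$ projects onto $\PP$ with quotient $\Coll(\delta,\gamma)^V\times R^V\cong\Coll(\delta,<\ka)^V$. This equals $\Coll(\delta,<\ka)^{V[G_\PP]}$ because $\PP$ is $\delta$-closed. Splitting at $\gamma+1$ instead of $\gamma^+$ would also work, since $\Coll(\delta,<\gamma+1)$ really does have size $\gamma^{<\delta}=\gamma$.

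Two smaller points.

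First, in Step 1 density needs a specific dichotomy. Each node at level $\beta+1$ should decide $\dot g(\beta)=\xi$ and be chosen either \emph{below} or incompatible with the $\xi$-th condition $s_\xi$ of $\mathbb{S}$. Then any $q=s_\xi$ has an extension forcing $\dot g(\beta)=\xi$ for some $\beta$, and the level-$(\beta+1)$ node compatible with that extension must lie below $q$. Your fallback does not achieve this. ``Compatible with a chosen extension'' is too weak. If the index is tied to the level, only $\delta<\gamma$ of the conditions are ever handled.

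Second, the separative quotient of a $\delta$-closed poset need not be $\delta$-closed, so ``taking separative quotients if needed'' is risky. Instead, run the tree construction directly in $\PP\times\Coll(\delta,\gamma)$. Density then only requires that every $q$ have a node forcing $q\in\dot G$.
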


Finally, we will need a lemma to show that certain forcings do not add small sets. The traditional lemma of this form is Easton's lemma:

\begin{lemma}[Easton]
	Let $\PP$ be a $\ka$-cc forcing and $\Q$ be a $\ka$-closed forcing. Then $\PP$ forces that $\Q$ is $<\ka$-distributive.
\end{lemma}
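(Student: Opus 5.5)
The plan is to argue directly in the product $\PP\times\Q$, viewing the extension $V[G][H]$ by $\PP\ast\dot{\Q}$ (with $\Q\in V$) as the product extension $V[G\times H]$. Fix $\gamma<\ka$, a condition $(p,q)\in\PP\times\Q$, and a $\PP\times\Q$-name $\dot f$ with $(p,q)\forces \dot f:\gamma\to\mathrm{Ord}$. I will find $q^*\le q$ and a $\PP$-name $\dot g$ such that $(p,q^*)\forces \dot f=\dot g$. A density argument then shows that every function from $\gamma$ to the ordinals in $V[G][H]$ already lies in $V[G]$, which is exactly $<\ka$-distributivity of $\Q$ in $V[G]$.

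\emph{Deciding a single value.} Fix $\alpha<\gamma$ and a starting condition $q'\le q$. By recursion on $\xi$, build a decreasing sequence $\langle q_\xi\rangle$ in $\Q$ below $q'$, conditions $p_\xi\le p$, and ordinals $x_\xi$ such that:
\begin{itemize}
\item the $p_\xi$ are pairwise incompatible;
\item $(p_\xi,q_\xi)\forces \dot f(\alpha)=x_\xi$.
\end{itemize}
At stage $\xi$, first take a lower bound $\bar q$ of $\langle q_\eta\mid\eta<\xi\rangle$; this exists by $\ka$-closure, provided $\xi<\ka$. If $\{p_\eta\mid\eta<\xi\}$ is already a maximal antichain below $p$, stop. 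Otherwise pick $r\le p$ incompatible with every $p_\eta$, and extend $(r,\bar q)$ to some $(p_\xi,q_\xi)$ that decides $\dot f(\alpha)$.

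The $p_\xi$ form an antichain in $\PP$, so by the $\ka$-cc the construction must halt at some stage $\xi^*<\ka$. This is the crucial interplay: the chain condition bounds the length strictly below $\ka$, and that is exactly what keeps the closure of $\Q$ available at every limit stage. At the halting stage, let $q^\alpha$ be a lower bound of the whole sequence. Then $\{p_\xi\mid \xi<\xi^*\}$ is a maximal antichain below $p$. Let $\dot g_\alpha$ be the $\PP$-name obtained by mixing, taking value $x_\xi$ below $p_\xi$. Since the $q_\xi$ are decreasing, $(p_\xi,q^\alpha)\le(p_\xi,q_\xi)$ for each $\xi$, and maximality of the antichain below $p$ gives $(p,q^\alpha)\forces\dot f(\alpha)=\dot g_\alpha$.

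\emph{All coordinates.} Next iterate over $\alpha<\gamma$, building a decreasing sequence $q\ge q^0\ge q^1\ge\cdots$. At each stage, start the single-value construction from a lower bound of the earlier $q^\beta$. Since $\gamma<\ka$, this lower bound exists by $\ka$-closure. Let $q^*$ be a final lower bound and $\dot g$ the name for $\langle\dot g_\alpha\mid\alpha<\gamma\rangle$. Then $(p,q^*)\forces\dot f=\dot g$, so $\dot f^{G\times H}\in V[G]$.

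The only delicate step is the halting argument inside the single-value construction, since it is the one place where both hypotheses are used simultaneously. The remaining steps are routine bookkeeping with closure below $\ka$.
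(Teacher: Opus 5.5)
Your proof is correct. The paper gives no proof of this statement: it is quoted as the classical Easton Lemma and used as a black box in the proof of Lemma~\ref{lem:easton}. The natural comparison is therefore with the standard textbook argument.

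That argument separates the two hypotheses. First one shows that forcing with $\Q$ preserves the $\ka$-cc of $\PP$: a decreasing $\ka$-sequence in $\Q$ deciding the members of a putative antichain of size $\ka$ would produce such an antichain already in $V$. Then, working in $V[H]$, one takes a nice $\PP$-name for $f$, which consists of $\gamma$ many antichains each of size $<\ka$. This small object lies in $V$ because $\Q$ adds no new ${<}\ka$-sequences.

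You instead interleave the two hypotheses in a single recursion in $\PP\times\Q$. The chain condition cuts off each antichain construction before stage $\ka$, so closure is always available for the lower bounds. This avoids nice names and the preservation step. It also gives a slightly stronger ``pure'' conclusion: only the $\Q$-coordinate needs to be strengthened, so $(p,q^*)\forces\dot f=\dot g$ with $p$ unchanged. In exchange, the textbook route yields the by-product that $\PP$ remains $\ka$-cc after forcing with $\Q$.
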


Unfortunately, Easton's Lemma will not apply directly; instead, we need to prove a variant in which $\PP$ is replaced with a full support product of forcings with good chain condition and closure.

\begin{lemma}\label{lem:easton}
	Let $\nu$ be a strong limit singular cardinal, and let $\langle \ka_\alpha \mid \alpha<\tau\rangle$ be an increasing sequence of regular cardinals cofinal in $\nu$. 
	Let $\CC$ be a $\nu^+$-closed forcing, and let $\PP_\tau  = \prod_{\alpha < \tau} \PP_\alpha$ be the full support product of forcings $\PP_\alpha$.
	
	Suppose that for unboundedly many $\alpha<\tau$, $\PP_{<\alpha}$ is $\ka_\alpha^+$-cc and $\PP_{\geq \alpha}$ is $\ka_\alpha^+$-closed.
	Then $\PP_\tau$ forces that $\CC$ is $<\nu^+$-distributive.
\end{lemma}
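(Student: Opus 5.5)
Fix $\delta<\nu^+$, a $\PP_\tau$-name $\dot f$ for a function $\delta\to V$, and a condition $(p,c)\in\PP_\tau\times\CC$. Since $\CC$ is in $V$, forcing with $\PP_\tau$ and then $\CC$ is the same as forcing with the product $\PP_\tau\times\CC$. So it suffices to find an extension of $(p,c)$ forcing that $\dot f$ lies in $V[G_{\PP_\tau}]$. We do not argue with distributivity directly. Instead we split $\PP_\tau$ at a suitable $\alpha$ and apply Easton's lemma to the chain-condition part.

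Fix one good $\alpha$, meaning $\PP_{<\alpha}$ is $\ka_\alpha^+$-cc and $\PP_{\geq\alpha}$ is $\ka_\alpha^+$-closed. Then $\PP_\tau\times\CC\cong\PP_{<\alpha}\times(\PP_{\geq\alpha}\times\CC)$. The second factor is $\ka_\alpha^+$-closed, because $\CC$ is $\nu^+$-closed and the product of two $\ka_\alpha^+$-closed posets is $\ka_\alpha^+$-closed. By Easton's lemma, $\PP_{<\alpha}$ forces that $\PP_{\geq\alpha}\times\CC$ is $<\ka_\alpha^+$-distributive.

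It follows that $\PP_{<\alpha}\times\PP_{\geq\alpha}$ and $\PP_{<\alpha}\times\PP_{\geq\alpha}\times\CC$ add the same sequences of length $\le\ka_\alpha$. Both are taken over $V[G_{<\alpha}]$, and both add only sequences lying in $V[G_{<\alpha}]$. Hence, for every good $\alpha$ with $\delta\le\ka_\alpha$, any function $\delta\to V$ added by $\PP_\tau\times\CC$ already lies in $V[G_{<\alpha}]\subseteq V[G_{\PP_\tau}]$. This settles the case $\delta<\nu$, since good $\alpha$ are unbounded in $\tau$ and the $\ka_\alpha$ are cofinal in $\nu$.

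The main obstacle is $\nu\le\delta<\nu^+$; it is enough to handle $\delta=\nu$, or even a $\tau$-sequence of pieces. First take $f\colon\nu\to V$ in the product extension and write $f=\bigcup_\beta f\uhr\ka_\beta$. Each restriction $f\uhr\ka_\beta$ lies in $V[G_{\PP_\tau}]$ by the previous step. But the sequence $\langle f\uhr\ka_\beta\mid\beta<\tau\rangle$ has length $\tau$, and $\tau<\nu$, so that sequence itself lies in $V[G_{\PP_\tau}]$ by the previous step again. Therefore $f\in V[G_{\PP_\tau}]$.

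For general $\delta<\nu^+$, fix in $V$ a bijection $\delta\leftrightarrow\nu$ and reduce to the case $\delta=\nu$. This gives $<\nu^+$-distributivity of $\CC$ over $V[G_{\PP_\tau}]$.

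The subtle point to check is the use of Easton's lemma as "no new $\ka_\alpha$-sequences over $V[G_{<\alpha}]$." Since $V[G_{<\alpha}\times G_{\geq\alpha}\times H]$ is an extension of $V[G_{<\alpha}]$ by the product $\PP_{\geq\alpha}\times\CC$, which is $<\ka_\alpha^+$-distributive there, this usage is legitimate. It also shows why we need unboundedly many good $\alpha$, rather than a single one: a single split would only control sequences of length at most $\ka_\alpha$.
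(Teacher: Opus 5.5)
Your proof is correct, and its skeleton matches the paper's. Both split $\PP_\tau\times\CC$ at a good $\alpha$ as $\PP_{<\alpha}\times(\PP_{\geq\alpha}\times\CC)$, use Easton's lemma to put each $f\uhr\ka_\alpha$ into $V[G_{<\alpha}]$, and then reassemble $f$ from its restrictions. You are also explicit, where the paper is not, that the closed factor in Easton's lemma must be $\PP_{\geq\alpha}\times\CC$ rather than $\PP_{\geq\alpha}$ alone.

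The difference is in the reassembly. The paper picks, for each good $\alpha$, a $\PP_{<\alpha}$-name $\dot f'_\alpha$ for $\dot f\uhr\ka_\alpha$ and asserts $1\forces\dot f=\bigcup_\alpha\dot f'_\alpha$. Read literally, this is too strong, because the right $\PP_{<\alpha}$-name depends on the $\PP_{\geq\alpha}\times\CC$-generic. For example, if $\CC$ adds a Cohen subset of $\nu^+$ and $\dot f(0)$ is its first bit, no $\PP_\tau$-name is forced by $1$ to equal $\dot f$. Your gluing step works instead: the short sequence of restrictions is itself a short sequence in the extension, hence already in $V[G_{\PP_\tau}]$. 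That is exactly the ingredient needed to make the assembly rigorous.

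There are a few small points to tighten.
\begin{itemize}
\item Your ``previous step'' was proved for functions into $V$, but the values $f\uhr\ka_\beta$ lie in $V[G_{\PP_\tau}]$, not in $V$. You can either code them by ordinals using a bijection in $V[G_{\PP_\tau}]$, or argue more cleanly with names. For the latter, choose for each good $\beta$ a $\PP_{<\beta}$-name $\sigma_\beta\in V$ with $\sigma_\beta^{G_{<\beta}}=f\uhr\ka_\beta$. Then $\langle\sigma_\beta\rangle_\beta$ is a short function into $V$, and the previous step applies verbatim. This is also the correct form of the paper's union of names.
\item The hypotheses do not literally give $\tau<\nu$: if $\nu=\aleph_\nu$, one could have $\tau=\nu$. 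Index the sequence by a cofinal set of good $\beta$ of order type $\cf(\nu)$ instead.
\item At the start, $\dot f$ should be a $\PP_\tau\times\CC$-name. The density framing with $(p,c)$ is never used, since you argue inside the generic extension; that is fine, but the framing can be dropped.
\end{itemize}
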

\begin{proof}
	Let $\dot{f}$ be a $\PP_\tau \times \CC$-name for a function from $\nu$ to the ordinals. Let $I\subseteq \tau$ be the unbounded set given by the hypotheses of the lemma. For each $\alpha \in I$, let $\dot{f}_\alpha$ be a $\PP_\tau\times \CC$-name forced by the empty condition to be the restriction of $\dot{f}$ to domain $\ka_\alpha$.
	
	By assumption, for all $\alpha \in I$, $\PP_{<\alpha}$ is $\ka_\alpha^+$-cc and $\PP_{\geq \alpha}$ is $\ka_\alpha^+$-closed. Applying the standard version of Easton's Lemma, we see that each $\PP_{\geq \alpha}$ is $<\ka_\alpha^+$-distributive.
	Since each $\dot{f}_\alpha$ is a name for a sequence of size $\ka_\alpha$, $\PP_{\geq \alpha}$ could not have added $\dot{f}_\alpha$, so we can find a $\PP_{<\alpha}$-name $\dot{f}'_\alpha$ for $\dot{f}\uhr \ka_\alpha$.
	
	Doing this for all $\alpha$ in $I$, we can define a $\PP_\tau$-name $\dot{f}' = \bigcup_{\alpha < \nu} \dot{f}'_\alpha$. Note that we are mildly abusing notation here: each condition $p$ appearing in each name $\dot{f}_\alpha$ is an element of $\PP_{<\alpha}$, not $\PP_\tau$, but we can replace it with the condition $p'\in \PP_\tau$
	that agrees with $p$ on all coordinates where $p$ is defined, and is trivial on the rest.
	
	Clearly $1_{\PP_\tau\times \CC} \forces \dot{f} = \dot{f}'$. We conclude that $\CC$ cannot have added $f$, since we have found a $\PP_\tau$-name for $f$.
\end{proof}
	
\section{ITP on a countable segment of successors of singular cardinals}

In this section we strengthen Golshani and Hayut's result to the super tree property. A key idea in their construction is to, for each limit ordinal $\alpha$, build a forcing $\PP_\alpha$ obtaining the tree property at $\alpha^+$. This forcing is designed to project onto an initial segment of the final forcing $\PP_t$. Given an $\alpha^+$-tree in $\PP_t$, they show that the tail of $\PP_t$ is closed enough that the tree must have been added by this initial segment. Since $\alpha^+$-trees are upwards absolute, they apply the tree property in $\PP_{\alpha}$ to obtain a branch, and then argue that this branch is present in $\PP_t$.

There are two primary difficulties in generalizing their argument. The first is that unlike trees, thin $\mc{P}_\ka(\la)$-lists are not upwards absolute, so ITP in an outer model will not a priori ensure the existence of ineffable branches through lists in the ground model. The second issue is that thin $\mc{P}_{\alpha^+}(\la)$-lists are very large, so we won't have enough closure to pass lists down to an initial segment of the final forcing.

To solve these problems, we add two extra pieces to the forcing. To deal with the lack of absoluteness, we introduce an auxiliary forcing that collapses $\la$ to $\ka$. This allows us restrict our attention to thin $\ka$-lists, which are upwards absolute. To solve the second issue, we modify $\PP_\alpha$ to make it project onto all of $\PP_t$. This is done by adding a single large Easton collapse that will project onto a tail of the target forcing.

\begin{thm}
	Let $S$ be a set of indestructibly supercompact cardinals with $\ot(S) = \min(S)^+$. For each countable ordinal $\theta$, there exists a generic extension in which the super tree property holds at all cardinals of the form $\aleph_{\alpha+1}$ for every limit ordinal $\alpha < \theta$.
\end{thm}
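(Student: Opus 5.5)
The plan is to follow Golshani and Hayut's construction, making the two modifications described at the start of this section. Fix $\theta<\w_1$. Enumerate $S$ increasingly and let $\ka_0=\min(S)$. We choose cardinals $\langle \ka_\xi \mid 0<\xi<\w\cdot\theta\rangle$ from $S$, increasing, and write $\nu_\alpha=\sup_{\xi<\w\cdot\alpha}\ka_\xi$ for each limit $\alpha<\theta$. The candidate final forcing is
\[\PP_t=\Coll(\w,\rho)\times\Coll(\rho^{++},<\ka_0)\times\prod_{\xi}E(\ka_\xi^{++},\ka_{\xi+1})\times\prod_{\alpha}\Coll(\nu_\alpha^{+},\nu_\alpha^{++})\text{-type pieces},\]
with full support in the $\xi$-product and $\rho<\ka_0$ still to be chosen. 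The pieces are arranged so that in the extension $\ka_\xi$ becomes $\aleph_{\xi'+2}$ for the appropriate $\xi'$, $\nu_\alpha=\aleph_\alpha$, and $\nu_\alpha^+=\aleph_{\alpha+1}$. Cardinal computations come from Lemma \ref{lem:eastonprops} together with Lemma \ref{lem:easton}. The latter is the reason Lemma \ref{lem:easton} was proved: the full-support product below each singular $\nu_\alpha$ is $\ka_\xi^+$-cc below level $\xi$ and $\ka_\xi^+$-closed above it. Hence the $\nu_\alpha^+$-closed tail adds no new $\nu_\alpha$-sequences, and $\nu_\alpha^+$ is preserved.

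Next, fix a limit $\alpha<\theta$ and a thin $\nu_\alpha^+$-list $d$ in $V[\PP_t]$. By Fact \ref{fact:ITPgoesdownwards}, proving $\ITP(\nu_\alpha^+,\nu_\alpha^+)$ suffices for the one-cardinal version, and Fact \ref{fact:ka-lists(ka,ka)-lists} lets us work with $\nu_\alpha^+$-lists, which are upward absolute. For larger $\la$ we use the auxiliary collapse $\Coll(\nu_\alpha^+,\la)$. This collapse makes $\mc{P}_{\nu_\alpha^+}(\la)$-lists reducible to $\nu_\alpha^+$-lists, and ineffable branches can be pulled back along a club.

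We then compare $\PP_t$ with the forcing of Proposition \ref{prop:itpaftereastoncolls_withaux}, applied to the $\w$-block $\langle\ka_{\w\cdot\beta+n}\mid n<\w\rangle$ cofinal in $\nu_\alpha$. The block's own Easton collapses occur there verbatim. The pieces below the block and the lower $\Coll(\w,\rho)\times\Coll(\rho^{++},<\ka_0)$ form the $\LL_\rho$ part; the lower Easton collapses are absorbed into $\Coll(\rho^{++},<\ka_0)$ by Lemma \ref{lem:colabsorption}, or sit as small factors. The single large Easton collapse $E(\nu_\alpha^{++},\sup S)$ projects, by Lemma \ref{lem:projections}, onto the whole tail of $\PP_t$ above $\nu_\alpha$. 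So $\PP_t$ is a projection of $\PP_\la\times\LL_\rho$, and the quotient is $\nu_\alpha^{+}$-closed enough in the relevant model. The proposition then gives an ineffable branch in the larger model. By Fact \ref{fact:branchapprox} this branch is thinly $\nu_\alpha^+$-approximated. We use Lemma \ref{lem:cctoapprox} to pass approximation through the chain-condition stages, and Lemma \ref{lem:M-S} to show the quotient has the thin $\nu_\alpha^+$-approximation property. The branch therefore already lies in $V[\PP_t]$. Stationarity of the set of agreement points is preserved because the quotient is sufficiently closed, or has the necessary chain condition.

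The main obstacle is that Proposition \ref{prop:itpaftereastoncolls_withaux} yields a $\rho$ that depends on $\alpha$ and on the choice of the sequence, whereas $\PP_t$ needs one $\rho$ serving all countably many $\alpha$ at once. This is where $\ot(S)=\min(S)^+$ enters. There are $\ka_0^+$ many disjoint ways to choose the blocks from $S$, and each choice determines a function $\alpha\mapsto\rho_\alpha$ from the countable set of limits below $\theta$ into $\ka_0$. Since $\ka_0^{\aleph_0}=\ka_0<\ka_0^+$, I would first try to use this counting to stabilize the choices. In parallel, I would try to show that a single $\rho\geq\sup_\alpha\rho_\alpha<\ka_0$ works for every $\alpha$, by re-running the argument of Lemma \ref{lem:gen2cardITP}: its set $I$ of good $\rho$ lies in $i$-measure-one form, and countably many such sets can be intersected. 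If neither approach gives a uniform $\rho$ directly, the fallback is to carry out the choice of $\rho$ inside the proof of Lemma \ref{lem:gen2cardITP}, taking the measure-one sets simultaneously for all $\alpha$.
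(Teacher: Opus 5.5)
There is a genuine gap, and it is exactly the step you flag as the main obstacle: you never obtain a single $\rho$, and none of your three suggestions supplies one.

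\begin{itemize}
\item Counting $\ka_0^+$ disjoint choices of blocks against $\ka_0^{\aleph_0}=\ka_0$ possible functions only makes $\alpha\mapsto\rho_\alpha$ repeat across different choices. It does not make it constant within one choice, which is what a single $\LL_\rho$ requires.
\item Taking $\rho\geq\sup_\alpha\rho_\alpha$ presumes a monotonicity in $\rho$ that nothing provides. For $\rho'\neq\rho_\alpha$, $\LL_{\rho'}$ gives a different model, about which Proposition \ref{prop:itpaftereastoncolls_withaux} says nothing.
\item The sets of good $\rho$ extractable from the proof of Lemma \ref{lem:gen2cardITP} are measure one for $\{X\subseteq\ka_0\mid\nu_\alpha\in i(X)\}$. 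This ultrafilter depends on $\nu_\alpha$, so the sets for different $\alpha$ need not meet.
\end{itemize}

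You also overlook that $\rho$ depends on $\la$; it must be stabilized over unboundedly many $\la$ before Fact \ref{fact:ITPgoesdownwards} applies.

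The missing idea is to fix the blocks \emph{before} building the final sequence. For every $\alpha\in D=\lim(S)\cap\sup(S)\cap\cof(\w)$, choose a cofinal $\w$-sequence $s_\alpha$ with $\min(s_\alpha)=\ka_0$. Then $\rho_{\alpha,\la}$ depends only on $\alpha$, $s_\alpha$ and $\la$, and some $\rho_\alpha<\ka_0$ recurs for unboundedly many $\la$. This is where $\ot(S)=\min(S)^+$ really enters: $\cf(\sup S)=\ka_0^+>\ka_0$, so $D$ is stationary and $\alpha\mapsto\rho_\alpha$ is constant, say equal to $\rho$, on a stationary $S'\subseteq D$. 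Golshani and Hayut's lemma (Fact \ref{fact:t}) then gives a closed $t\subseteq S$ of order type $\theta$. All limit points of $t$ lie in $S'$, and $t$ contains $s_\alpha$ for each of its limit points $\alpha$.

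Your projection below the block also fails as written. The $\LL_\rho$ of Proposition \ref{prop:itpaftereastoncolls_withaux} is $\Coll(\w,\rho)\times\Coll(\rho^{++},<\ka)$ with $\rho<\ka$, where $\ka$ is the \emph{first} cardinal of the block (the supercompact whose embedding drives Lemma \ref{lem:gen2cardITP}). So a block $\langle\ka_{\w\cdot\beta+n}\mid n<\w\rangle$ starting above $\ka_0$ does not yield $\rho<\ka_0$. Moreover, the Easton collapses of your $\PP_t$ between $\ka_0$ and the block have size $>\ka_0$, so Lemma \ref{lem:colabsorption} cannot absorb them into $\Coll(\rho^{++},<\ka_0)$. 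Adding them as extra factors is not covered either, since the proposition gives ITP only after $\LL_\rho\times\PP_\la$ itself.

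In the paper the block $s_\alpha$ starts at $\ka_0$, skips the intermediate cardinals of $t$, and lies inside $t$. Lemma \ref{lem:projections} projects each $E(s_\alpha(n)^{++},s_\alpha(n+1))$ onto the product of the factors $E(t(i)^{++},t(i+1))$ with $s_\alpha(n)\leq t(i)<s_\alpha(n+1)$, and $E(t(\alpha)^{++},\sup S)$ onto the tail. This is why $s_\alpha\subseteq t$ is needed. It yields a projection of $\PP_{t(\alpha),\la}$ onto $\PP_t\times\Coll(t(\alpha)^+,\la)$ whose quotient is only $\ka_0^{++}$-closed. That suffices for Lemma \ref{lem:M-S} over the small $\LL_\rho$, since $\cf(t(\alpha))=\w$.

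Two smaller points:
\begin{itemize}
\item Your indexing is inconsistent: a block $\langle\ka_{\w\cdot\beta+n}\mid n<\w\rangle$ is cofinal in $\nu_{\beta+1}$, not in $\nu_\alpha$ for limit $\alpha$. A closed $t$ of order type $\theta$ gives $t(\alpha)=\aleph_\alpha$ directly.
\item The indispensable use of Lemma \ref{lem:easton} is to show that $\Coll(t(\alpha)^+,\la)^V$ remains $<t(\alpha)^+$-distributive after forcing with $\PP_t$. This keeps $\mc{P}_{t(\alpha)^+}(\la)$ unchanged when you collapse $\la$ and reduce to $t(\alpha)^+$-lists.
\end{itemize}
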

\begin{proof}
	Let $\ka_0 = \min(S)$. Let $D = \lim(S)\cap \sup S \cap \cof(\w)$. For every $\alpha \in D$, let $s_\alpha$ be an $\w$-sequence cofinal in $\alpha$, with $\ka_0 = \min(s_\alpha)$.
	For each $\rho < \ka_0$, define 
	\[\LL_\rho = \Coll(\w,\rho)\times \Coll(\rho^{++}, <\ka_0).\]
	For every $\alpha \in D$ and every strongly regular $\la > \sup(S)$, define
	\[\PP_{\alpha,\la} = \left(\prod_{n<\w} E(s_\alpha(n)^{++}, s_\alpha(n+1))\right) \times E(\alpha^{++}, \sup(S))\times \Coll(\alpha^+, \la).\]
	For all such $\alpha$ and $\la$, we apply Proposition \ref{prop:itpaftereastoncolls_withaux} to conclude that there is $\rho_{\alpha,\la} < \ka_0$ such that after forcing with
	$\LL_{\rho_{\alpha,\la}} \times \PP_{\alpha,\la}$, ITP holds at $\alpha^+$.
	
	For each fixed $\alpha \in D$, there is some $\rho_\alpha < \ka_0$ such that for unboundedly many $\lambda > \sup(S)$, $\rho_{\alpha,\la} = \rho_\alpha$.
	By Fodor's Lemma, there is a fixed $\rho < \ka_0$ and $S' \subseteq D$ stationary such that for all $\alpha \in S'$, $\rho_{\alpha} = \rho$.
	
	Note that for all $\alpha \in S'$, $\rho = \rho_{\alpha,\lambda}$ for unboundedly many $\lambda$, but the values of $\lambda$ for which this equality holds depend on $\alpha$. So it is not necessarily the case that for fixed $\alpha,\beta \in S'$, $\rho_{\alpha,\lambda} = \rho_{\beta,\lambda}$ for unboundedly many (or even any) $\la$.
	
	Fix $\theta < \w_1$. As in \cite{golshani-hayut:tpcountablesegment}, we find a subsequence of $S\cup S'$ of order type $\theta$, using the following fact:
	\begin{fact}\cite[Lemma 3.1]{golshani-hayut:tpcountablesegment}\label{fact:t}
		Suppose $\theta < \w_1$. Then there exists a closed $t \subseteq S$ with $\ot(t) = \theta$ such that for all $\alpha \in \lim(t)$, $\alpha \in S'$ and $s_\alpha \subseteq t$.
	\end{fact}
	Let $t$ be as in Fact \ref{fact:t}, and define $\PP_t$ to be the forcing poset
	\[\PP_t = \prod_{i<\theta} E(t(i)^{++},t(i+1)).\]
	Let $L_\rho \times G_t$ be $\LL_\rho \times \PP_t$-generic over $V$. $V[L_\rho \times G_t]$ is our final model; we will show that in this model, $\ITP(\aleph_{\alpha+1})$ holds for every limit ordinal $\alpha < \theta$. Note that for each such $\alpha$, $(\aleph_\alpha)^{V[L_\rho \times G_t]} = t(\alpha)$ and $(\aleph_{\alpha+1})^{V[L_\rho \times G_t]} = t(\alpha)^+$.
	
	Let $\alpha < \theta$ be a limit ordinal. Let $\la > \sup(S)$ be strongly regular such that $\rho = \rho_{\alpha,\la}$. By Fact \ref{fact:ITPgoesdownwards}, since this holds for unboundedly many $\la$, it suffices to verify $\ITP(t(\alpha)^+, \la)$ only for these values of $\la$. Let $G_{t(\alpha),\la}$ be generic for $\PP_{t(\alpha),\la}$, and let $\dot{d}$ be a $\LL_\rho\times \PP_t$-name for a thin $\mc{P}_{t(\alpha)^+}(\la)$-list.
	
	Let $K$ be generic for $\Coll(t(\alpha)^+, \la)$ over $V$. Work in $V[L_\rho][G_t][K]$. Note that we can split $\PP_t$ into the following product:
	\[\PP_t = \left(\prod_{i < \alpha} E(t(i)^{++}, t(i+1))\right) \times \left(\prod_{\alpha \leq i} E(t(i)^{++}, t(i+1))\right).\]
	
	Let $\PP_{t,1}$ denote the first component of this product, with generic $G_{t,1}$; let $\PP_{t,2}$ denote the second component, with generic $G_{t,2}$.
	
	Next, we verify that $\Coll(t(\alpha)^+,\la)^V$ remains distributive after forcing with $\PP_t$. This is where we need to apply the generalization of Easton's Lemma. 
	
	By Lemma \ref{lem:eastonprops}, noting that we are taking a full support product, $\PP_{t,2}$ is $t(\alpha)^{++}$-closed. Again by Lemma \ref{lem:eastonprops}, each component $E(t(i)^{++}, t(i+1))$ of the first product is $t(i+1)^+$-cc and $t(i)^{++}$-closed; this remains true in $V[G_{t,2}]$. Similarly,  $\Coll(t(\alpha)^+,\la)^V$ remains closed in $V[G_{t,2}]$. Working in $V[G_{t,2}]$, we see that $\PP_{t,1}\times \Coll(t(\alpha)^+,\la)^V$ meets the hypotheses of Proposition \ref{lem:easton}. We conclude that 
	$\Coll(t(\alpha)^+, \la^+)$ remains $t(\alpha)^+$-distributive in $V[G_t]$. It follows that $\mc{P}_{t(\alpha)^+}(\la)^{V[G_t]} = \mc{P}_{t(\alpha)^+}(\la)^{V[G_t][K]}$. Since these two sets are the same, we will omit the superscripts.
	
	In $V[L_\rho][G_t][K]$, $\la$ has been collapsed to an ordinal with cardinality and cofinality $t(\alpha)^+$. In particular, this means that there is an order-isomorphism between $\mc{P}_{t(\alpha)^+}(\la)$ and $\mc{P}_{t(\alpha)^+}(t(\alpha)^+)$. Using this isomorphism, we can construct a thin $\mc{P}_{t(\alpha)^+}(t(\alpha)^+)$-list that has an ineffable branch if and only if $d$ does. In fact, since $t(\alpha)^+$ is club in $\mc{P}_{t(\alpha)^+}(t(\alpha)^+)$ and the ineffability of a branch is determined on a stationary subset, we can find a thin $t(\alpha)^+$-list $d'$ that has an ineffable branch if and only if $d$ does.
	
	We move now to $V[L_\rho][G_{t(\alpha),\la}]$. The first step is describing a projection from $V[G_{t(\alpha),\la}]$ to $V[G_t][K]$.
	
	 Recall that $\PP_{t(\alpha),\la}$ is the following product:
	\[\PP_{t(\alpha),\la} = \left(\prod_{n<\w} E(s_{t(\alpha)}(n)^{++}, s_{t(\alpha)}(n+1))\right) \times E(t(\alpha)^{++}, \sup(S))\times \Coll(t(\alpha)^+,\la).\]
	Let $\PP_{t(\alpha),\la}^1$ denote the first component, $\prod_{n<\w}E(s_{t(\alpha)}(n)^{+}, s_{t(\alpha)}(n+1))$; let $\PP_{t(\alpha),\la}^2$ denote the second component, $E(t(\alpha)^{++}, \sup(S))$.
	As before, we split $\PP_t$ into the product
	\[\PP_t = \left(\prod_{i < \alpha} E(t(i)^{++}, t(i+1))\right) \times \left(\prod_{\alpha \leq i} E(t(i)^{++}, t(i+1))\right).\]
	Let $\PP_{t}^1$ denote $\prod_{i < \alpha} E(t(i)^{++}, t(i+1))$, and $\PP_{t}^2$ denote  $\prod_{\alpha \leq i} E(t(i)^{++}, t(i+1))$. Let $G_{t(\alpha)}^1$, $G_{t(\alpha)}^2$, $G_{t}^1$, and $G_{t}^1$ be generic for $\PP_{t(\alpha),\la}^1$, $\PP_{t(\alpha),\la}^2$, $\PP_{t}^1$, and $\PP_{t}^2$, respectively.
	
	We now examine projections from each piece of $\PP_{t(\alpha),\la}$ onto the corresponding component of $\PP_t\times \Coll(t(\alpha)^+,\la)$. Clearly $\Coll(t(\alpha)^+,\la)$ projects onto itself via the identity. 
	Note that by Lemma \ref{lem:projections}, $\PP_{t(\alpha),\la}^2$ projects onto $\PP_{t}^2$.
	
	The remaining piece requires slightly more analysis. Following the arguments in \cite{golshani-hayut:tpcountablesegment}, let $\langle \xi_n \mid n < \w\rangle$ be an increasing sequence of ordinals below $\theta$ such that $t(\xi_n) = s_{t(\alpha)}(n)$. Note that we can rewrite $\PP_{t}^1$ as
	\[\prod_{n<\w} \prod_{\xi_n \leq i \leq \xi_{n+1}} E(t(i)^{++}, t(i+1)).\]
	By Lemma \ref{lem:projections}, for each $n < \w$ there is a projection
	\[E(s_{t(\alpha)}(n)^{++},s_{t(\alpha)}(n+1)) \to \prod_{\xi_n \leq i < \xi_{n+1}} E(t(i)^{++}, t(i+1));\]
	combining these gives a projection 
	\[\prod_{n<\w} E(s_{t(\alpha)}(n)^{++},s_{t(\alpha)}(n+1)) \to \prod_{n<\w} \prod_{\xi_n \leq i < \xi_{n+1}} E(t(i)^{++}, t(i+1)).\]
	That is, we obtain a projection from $\PP_{t(\alpha),\la}^1$ to $\PP_{t}^1$.
	
	Putting the pieces together, we conclude that $\PP_{t(\alpha),\la}$ projects onto $\PP_t\times \Coll(t(\alpha)^+,\la)$. Let $\PP^* = (\PP_{t(\alpha),\la}^1\times \PP_{t(\alpha),\la}^2)/\PP_t$, with generic $G^*$. Then we can write $V[L_\rho][G_{t(\alpha),\la}]$ as $V[L_\rho][G_t][K][G^*]$.

	\begin{claim}
		The list $d'$ has an ineffable branch $b'$ in $V[L_\rho][G_{t(\alpha),\la}] = V[L_\rho][G_t][K][G^*]$.
	\end{claim}
	\begin{proof}
		Since $d'$ is contained in $V[L_\rho][G_t][K]$, it must also be present in $V[L_\rho][G_{t(\alpha),\la}]$. Since being a thin $t(\alpha)^+$-list is upwards absolute, $d'$ remains a thin $t(\alpha)^+$-list in $V[L_\rho][G_{t(\alpha),\la}]$. By construction, $\ITP(t(\alpha)^+)$ holds in $V[L_\rho][G_{t(\alpha),\la}]$, so $d'$ must have an ineffable branch.
	\end{proof}

	\begin{claim}
		The branch $b'$ is in $V[L_\rho][G_t][K]$.
	\end{claim}
	Recall that the quotient $\PP^*$ is $\ka_0^{++}$-closed in $V[G_t]$, and thus also in $V[K][G_t]$. Note that $|\LL_\rho| = \ka_0^+$ (in $V[K][G_t]$), since GCH holds in $V$ and the remainder of the forcings are $\ka_0^+$. Then applying Lemma \ref{lem:M-S} in $V[G_t][K]$, we see $\PP^*$ has the thin $t(\alpha)^+$-approximation property in $V[G_t][K][L_\rho]$.
	
	By Fact \ref{fact:branchapprox}, $b'$ is thinly $t(\alpha)^+$ approximated by any forcing over $V[L_\rho][G_t][K]$. We conclude that $b' \in V[L_\rho][G_t][K]$.
	\end{proof}
	
	\begin{claim}
		The list $d$ has an ineffable branch $b$ in $V[L_\rho][G_t][K]$.
	\end{claim}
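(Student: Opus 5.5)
The plan is to transfer the ineffable branch $b'$ through $d'$ back to $d$ along the order-isomorphism used to build $d'$. Everything happens in $V[L_\rho][G_t][K]$, where by the previous claim $b'$ now lives.

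\textbf{Step 1: fix the translation.} Work in $V[L_\rho][G_t][K]$. Here $\la$ has cardinality and cofinality $t(\alpha)^+$, so fix a bijection $\pi:\la\to t(\alpha)^+$. Since $t(\alpha)^+$ is regular, the set
\[C=\{z\in\mc{P}_{t(\alpha)^+}(\la)\mid \pi[z]\in t(\alpha)^+\}\]
is club. Thus $\pi$ induces an isomorphism between $C$ and a club of ordinals $\gamma<t(\alpha)^+$, namely those $\gamma$ with $\pi^{-1}[\gamma]\in C$.

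I would take $d'$ to be the list $d'_\gamma=\pi[d_{\pi^{-1}[\gamma]}]$ on that club, with $d'_\gamma=\emptyset$ elsewhere.

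\textbf{Step 2: check that $d'$ is a thin $t(\alpha)^+$-list.} Its levels are images under $\pi$ of subsets of levels of $d$, plus possibly $\emptyset$. Since $d$ is thin in $V[L_\rho][G_t][K]$, $d'$ is thin.

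One point needs care. $d$ is thin in $V[L_\rho][G_t]$, and the levels of $d$ are computed from $\mc{P}_{t(\alpha)^+}(\la)$. This set is unchanged by $K$, because $\Coll(t(\alpha)^+,\la)$ is distributive over $V[G_t]$, and the same argument works with $L_\rho$ included, since $\LL_\rho$ is small relative to the closure. So the levels of $d$, and their sizes below $t(\alpha)^+$, are the same in $V[L_\rho][G_t][K]$. Thinness and being a $\mc{P}_{t(\alpha)^+}(\la)$-list are therefore preserved.

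\textbf{Step 3: pull the branch back.} Put $b=\pi^{-1}[b']\subseteq\la$. Let $T'=\{\gamma\mid b'\cap\gamma=d'_\gamma\}$, which is stationary in $t(\alpha)^+$. Intersect $T'$ with the club image of $C$. For each resulting $\gamma$, with $z=\pi^{-1}[\gamma]$, we get $b\cap z=\pi^{-1}[b'\cap\gamma]=d_z$.

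So $\{z\mid b\cap z=d_z\}\supseteq\pi^{-1}[T'\cap \pi[C]]$. It remains to see that this is stationary in $\mc{P}_{t(\alpha)^+}(\la)$. I would use the standard fact that the isomorphism $z\mapsto\pi[z]$ carries clubs to clubs, in both directions, between $\mc{P}_{t(\alpha)^+}(\la)$ and $\mc{P}_{t(\alpha)^+}(t(\alpha)^+)$, together with the fact that the ordinals form a club in the latter. Hence stationarity transfers, and $b$ is an ineffable branch through $d$ in $V[L_\rho][G_t][K]$.

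\textbf{Main obstacle.} I expect the main subtlety to be making precise the ``if and only if'' correspondence asserted when $d'$ was defined. In particular one must confirm that stationarity is computed with the same $\mc{P}_{t(\alpha)^+}(\la)$ in the relevant model. This is where the distributivity computation above, via Lemma \ref{lem:easton}, is used.
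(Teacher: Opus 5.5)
Your pullback along $\pi$ in Steps 1--3 is fine. It unpacks the ``$d'$ has an ineffable branch if and only if $d$ does'' equivalence, which the paper asserts when it builds $d'$.

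The gap is the one step the paper's proof actually consists of. In Step 3 you take $T'=\{\gamma\mid b'\cap\gamma=d'_\gamma\}$ to be stationary in $t(\alpha)^+$ while working in $V[L_\rho][G_t][K]$. Nothing established so far gives this. The first claim makes $b'$ ineffable only in the larger model $V[L_\rho][G_{t(\alpha),\la}]$, and the second claim gives only $b'\in V[L_\rho][G_t][K]$, not ineffability there.

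You must transfer stationarity downward, as follows. Since $b',d'\in V[L_\rho][G_t][K]$, so is $T'$. The ordinal $t(\alpha)^+$ is the same regular cardinal in both models; in the larger one it is the cardinal at which ITP holds. Hence every club $E\subseteq t(\alpha)^+$ lying in $V[L_\rho][G_t][K]$ is still a club in $V[L_\rho][G_{t(\alpha),\la}]$. So $T'\cap E\neq\emptyset$, and $T'$ is stationary in $V[L_\rho][G_t][K]$. This is the paper's ``ineffability is downwards absolute'' step. Once it is inserted, your translation completes the proof exactly as the paper does.

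Do this transfer on the ordinal side, for $T'\subseteq t(\alpha)^+$, before pulling back. The ordinal $t(\alpha)^+$ is literally the same in both models. By contrast, $\mc{P}_{t(\alpha)^+}(\la)$ need not be, since the quotient $\PP^*$ is only $\ka_0^{++}$-closed and can add new small subsets of $\la$. That is exactly where the reduction to a $t(\alpha)^+$-list pays off.

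Relatedly, the ``main obstacle'' you flag is not what this claim hinges on. That $\mc{P}_{t(\alpha)^+}(\la)$ is unchanged by $K$ is needed for $d$ to be a $\mc{P}_{t(\alpha)^+}(\la)$-list in $V[L_\rho][G_t][K]$ at all. It is needed again later, when $b$ is passed down to $V[L_\rho][G_t]$.
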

	\begin{proof}
		By the previous claims, $b'$ is contained in $V[L_\rho][G_t][K]$. Since ineffability is downwards absolute, $b'$ is an ineffable branch through $d'$. Since $d'$ has an ineffable branch if and only if $d$ does, we conclude that there is an ineffable branch $b$ through $d$ contained in $V[L_\rho][G_t][K]$.
	\end{proof}
	
	\begin{claim}
		The branch $b$ is in $V[L_\rho][G_t]$.
	\end{claim}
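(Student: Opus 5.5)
We need to show that the ineffable branch $b$ through $d$, found in $V[L_\rho][G_t][K]$, already lies in $V[L_\rho][G_t]$. The plan is to view $K$ as generic for $\Coll(t(\alpha)^+,\la)^V$ over $V[L_\rho][G_t]$, and to show that this collapse adds no new ineffable branches through $d$. Since $d$ is a thin $\mc{P}_{t(\alpha)^+}(\la)$-list in $V[L_\rho][G_t]$, Fact \ref{fact:branchapprox} shows that any $\Coll(t(\alpha)^+,\la)^V$-name $\dot b$ for a cofinal branch through $d$ is thinly $t(\alpha)^+$-approximated over $V[L_\rho][G_t]$. So it suffices to show that $\Coll(t(\alpha)^+,\la)^V$ has the thin $t(\alpha)^+$-approximation property over $V[L_\rho][G_t]$. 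Once that is known, $b\in V[L_\rho][G_t]$.

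To get the approximation property, I would reuse the Magidor--Shelah style Lemma \ref{lem:M-S}, as in the previous claim, arguing in stages.

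\emph{Stage 1.} Work over $V[G_t]$. In the argument above, which used the variant of Easton's Lemma (Lemma \ref{lem:easton}), we saw that $\Coll(t(\alpha)^+,\la)^V$ remains $<t(\alpha)^+$-distributive in $V[G_t]$.

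\emph{Stage 2.} Upgrade distributivity to closure. I would split $G_t=G_t^1\times G_t^2$. The tail $\PP_t^2$ is $t(\alpha)^{++}$-closed, so $\Coll(t(\alpha)^+,\la)^V$ is still $t(\alpha)^+$-closed in $V[G_t^2]$. The product $\PP^1_t$ is a full support product of Easton collapses below $t(\alpha)$. For it I would apply the thin approximation argument of Lemma \ref{lem:M-S} over $V[G_t^2]$, with $\mu$ chosen below $t(\alpha)$ and the ``small'' forcing taken to be $\LL_\rho$ together with the initial part of $\PP_t^1$ up to some $t(\xi_n)$. The remaining tail of $\PP^1_t$ is then highly closed and can be absorbed into the closed side $\Q$. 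However, $\PP_t^1$ is not small, so this decomposition does not fit Lemma \ref{lem:M-S} directly.

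\emph{Alternative route.} I expect the intended argument to be simpler: a direct Magidor--Shelah branch argument. Suppose $\dot b$ is a $\Coll(t(\alpha)^+,\la)^V$-name for a new branch. Since the collapse is $t(\alpha)^+$-distributive in $V[L_\rho][G_t]$, we can build a tree of conditions of height $\omega$ (the cofinality of $t(\alpha)$) and width $2$ that splits the values $\dot b\cap z$ for sufficiently large $z$. This uses distributivity to obtain lower bounds, or strategic closure, which the collapse retains because $\LL_\rho\times\PP_t$ is $t(\alpha)^+$-cc by Lemma \ref{lem:eastonprops}. Together with thinness ($|L_z|<t(\alpha)^+$, while $2^\omega<t(\alpha)$ many branch values are produced), this gives a contradiction.

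The main obstacle is exactly this point. The collapse is merely distributive, not closed, over $V[L_\rho][G_t]$, because $\PP_t^1$ is a large full-support product. So I would first try to show that the collapse is $t(\alpha)^+$-strategically closed there, via the chain condition of $\PP_t^1$ and a term-forcing argument, and then run the splitting argument against the thinness of the level $L_z$.
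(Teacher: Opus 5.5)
Your reduction is the paper's: by Fact \ref{fact:branchapprox}, $b$ is thinly $t(\alpha)^+$-approximated by $\Coll(t(\alpha)^+,\la)^V$ over $V[L_\rho][G_t]$, so everything rests on this collapse having the thin $t(\alpha)^+$-approximation property there. But you never establish that property. The obstacle that made you abandon Stage 2 is not real, and your Stage 2 decomposition was essentially right; what went wrong is where you put the tail.

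Lemma \ref{lem:M-S} only requires $\Q$ to be $\mu^+$-closed for \emph{some} $\mu<t(\alpha)$ with $\omega\le\mu$, and only the other factor has to be small. So fix any $\beta<\alpha$ and split $\PP_t=\PP_{t,\le\beta}\times\PP_{t,>\beta}$. Here $\PP_{t,>\beta}$ consists of \emph{all} coordinates $i>\beta$, including those between $\beta$ and $\alpha$. Put this whole tail into the ground model, not into $\Q$:
\begin{itemize}
\item $\PP_{t,>\beta}$ is $t(\beta+1)^{++}$-closed, so it adds no new sequences of that length.
\item Hence $\Coll(t(\alpha)^+,\la)^V$ is still $t(\beta+1)^{++}$-closed in $V[G_{t,>\beta}]$, where $t(\alpha)$ is still a singular strong limit of countable cofinality.
\item The remaining forcing $\PP_{t,\le\beta}\times\LL_\rho$ has size $<t(\beta+1)^{++}$.
\end{itemize}
Lemma \ref{lem:M-S} with $\mu=t(\beta+1)^+$, applied over $V[G_{t,>\beta}]$, then gives the thin approximation property in $V[G_{t,>\beta}][G_{t,\le\beta}][L_\rho]=V[L_\rho][G_t]$. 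This is exactly the paper's argument.

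Your placement of the tail on the closed side does not work as stated. With that placement, Lemma \ref{lem:M-S} gives approximation only over a model that does not contain $d$, since $d$ is added by part of $\Q$. So Fact \ref{fact:branchapprox} cannot be invoked there.

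Your alternative route does not close the gap either.
\begin{itemize}
\item The claim that $\LL_\rho\times\PP_t$ is $t(\alpha)^+$-cc is false. $\PP^2_t$ contains $E(t(\alpha)^{++},t(\alpha+1))$, which has antichains of size $t(\alpha)^{++}$.
\item Even $\PP^1_t$ fails it. It is a full-support product of countably many factors whose antichains have sizes cofinal in $t(\alpha)$, so it has antichains of size $t(\alpha)^{\aleph_0}>t(\alpha)$. This is precisely why the paper needs Lemma \ref{lem:easton} rather than Easton's Lemma.
\item You therefore have no basis for strategic closure. Mere distributivity does not supply lower bounds for the descending chains a splitting construction needs.
\item The final counting gives no contradiction. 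Producing $2^{\aleph_0}<t(\alpha)$ distinct values of $\dot b\cap z$ is perfectly compatible with $|L_z|\le t(\alpha)$. The Magidor--Shelah argument gets its contradiction from the interplay between the small forcing and the closure of $\Q$, which is what Lemma \ref{lem:M-S} packages.
\end{itemize}
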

	\begin{proof}
		First, we note that $b$ is thinly $t(\alpha)^+$-approximated by $\Coll(t(\alpha)^+,\la)^V$ over $V[L_\rho][G_t]$. 
		We wish to apply Lemma \ref{lem:M-S} to show that $\Coll(t(\alpha)^+, \la)^V$ has the thin $t(\alpha)^+$-approximation property over $V[L_\rho][G_t]$. 
		
		To do this, we need to refactor $\PP_t$. Select $\beta$ such that $\w < \beta < t(\alpha)$. We factor $\PP_t$ as
		
		\[\PP_t = \left(\prod_{i < \beta+1} E(t(i)^{++}, t(i+1))\right) \times \left(\prod_{\beta+1\leq i < \theta}  E(t(i)^{++}, t(i+1))\right).\]
		We will denote the first component by $\PP_{t,\leq \beta}$ and the second by $\PP_{t, >\beta}$, with respective generics $G_{t,\leq \beta}$ and $G_{t, >\beta}$.
		
		Noting that $\PP_{t,> \beta}$ is $t(\beta+1)^{++}$-closed, in $V[G_{t,> \beta}]$ the collapse poset $\Coll(t(\alpha)^+, \la)^V$ remains $t(\beta+1)^{++}$-closed.
		$\PP_{t,\leq \beta}\times \LL_\rho$ has size $<t(\beta+1)^{++}$; this remains true in $V[G_{t,\geq \beta}$.
		
		Applying Lemma \ref{lem:M-S} in $V[G_{t,>\beta}]$, we conclude that in $V[G_{t,>\beta}][G_{t,\leq \beta}][L_\rho] = V[L_\rho][G_t]$, $\Coll(t(\alpha)^+,\la)^V$ has the thin $t(\alpha)^+$-approximation property. Then the collapse could not have added the branch $b$, so $b\in V[L_\rho][G_t]$.
	\end{proof}
Since ineffability is downwards absolute, $b$ remains an ineffable branch in $V[L_\rho][G_t]$. This completes the proof.

\section{Segments of multiple cofinalities}\label{s:manycofs}
The arguments in the previous section (and in Hayut and Golshani's original paper) are very similar to the techniques used by the author in \cite{adkisson:manycofs}, where the strong tree property was forced at successors of singulars of many cofinalities simultaneously. In fact, the arguments in that paper were directly inspired by Golshani and Hayut's proof. In both constructions, the (strong) tree property is obtained in a larger model that projects down to the desired forcing. Moreover, the constructions in each setting project to a \emph{different part} of the target forcing. To obtain the tree property for many singulars of the same cofinality, we look at forcings projecting onto the second component of the forcing, the product of collapses between the supercompacts. To obtain the tree property at successors of singulars of many cofinalities, we instead project onto the first part of the forcing, the piece that makes $\ka_0$ become accessible.

Since the projections are on completely separate parts of the forcing, they do not interact, so it is natural to combine these constructions. In this section we build a forcing that will obtain the strong tree property for segments of successors of singular cardinals of multiple fixed cofinalities simultaneously. As in \cite{adkisson:manycofs}, the final model will have a new $\w_n$ for each $1\leq n<\w$ that will be selected partway through the construction (and depend not only on the previous choices of $\w_i$, but also on the desired lengths of the previous segments). This makes the equivalent of ``arbitrarily long countable segment" for uncountable cofinalities slightly more awkward to state. The correct generalization is the following:

\begin{theorem}
Consider the following game of length $\w$. Player I plays cardinals $\theta_i$ and Player II plays cardinals $\mu_i$ such that the following properties are satisfied:
\begin{enumerate}
	\item $\theta_0 < \w_1$
	\item $\theta_i < \mu_{i-1}^{++}$ for $0<i<\w$
	\item $\langle \mu_i \mid i < \w\rangle$ is an increasing sequence.
\end{enumerate}

Player II wins the game if there is a generic extension with the following properties:
\begin{itemize}
	\item For all $i<\w$, $\mu_i^+ = \w_i$.
	\item The strong tree property holds at the first $\theta_0$-many successors of singulars of countable cofinality, except for $\aleph_{\w+1}$.
	\item For all $0<i<\w$, the strong tree property holds for the first $\theta_i$-many successors of singulars of cofinality $\w_i$.
\end{itemize}

Suppose there is a set $S$ of supercompact cardinals with $|S| \geq \min(S)^+$. Then Player II has a winning strategy in this game.
\end{theorem}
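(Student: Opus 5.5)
The plan is to combine the construction of the previous section with the construction of \cite{adkisson:manycofs}. The forcing will be a product of two pieces. The first piece makes $\ka_0=\min(S)$ (and its relatives) accessible by successive collapses determined by Player II's moves $\mu_i$. The second piece is a product of Easton collapses $\prod_{i<\theta'} E(t(i)^{++},t(i+1))$ along a closed set $t\subseteq S$, chosen as in Fact \ref{fact:t} to have enough limit points of each cofinality. Player II's strategy will be defined by induction on $i$. Having fixed $\mu_0,\dots,\mu_{i-1}$ and the corresponding collapses $\LL^{(i)} = \prod_{k<i}\Coll(\mu_{k-1}^{++},\mu_k)$ (with $\mu_{-1}^{++}=\w$), Player I plays $\theta_i<\mu_{i-1}^{++}$. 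Player II then chooses $\mu_i<\ka_0$ by a pressing-down argument analogous to the choice of $\rho$ in Section 3.

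Concretely, for each cofinality $\w_i$ we let $D_i=\lim(S)\cap\cof(\mu_{i-1}^{++})$ (in $V$, where $\mu_{i-1}^{++}$ will become $\w_i$). For every $\alpha\in D_i$ we fix a continuous cofinal sequence $s_\alpha$ of length $\mu_{i-1}^{++}$ starting at $\ka_0$. For strong regular $\la>\sup S$ we form $\PP_{\alpha,\la}$ as before, with $\prod_{\eta} E(s_\alpha(\eta)^{++},s_\alpha(\eta+1))$ in place of the $\w$-product. We then apply Lemma \ref{lem:gen2cardstrTP}, with $\tau=\mu_{i-1}^{++}$ and $I$ ranging over candidate values of $\mu_i$, to get some $\mu_{\alpha,\la}$ such that after $\LL^{(i)}\times\Coll(\mu_{i-1}^{++},\mu_{\alpha,\la}^{+})$-type collapses (as in \cite{adkisson:manycofs}) times $\PP_{\alpha,\la}$, $\TP(\alpha^+,\alpha^+)$ holds. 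Stabilizing over unboundedly many $\la$ and then applying Fodor over $\alpha\in D_i$ yields a single $\mu_i$ working for a stationary $S_i'\subseteq D_i$. We choose $\mu_i$ above $\mu_{i-1}$, which uses $|S|\ge\min(S)^+$ and stationarity. Player II plays this $\mu_i$.

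At the end we need a single $t$ good for every cofinality at once. We will apply a version of Fact \ref{fact:t} handling countably many stationary sets $S'_i$ and the order types $\theta_i<\mu_{i-1}^{++}$. The bound $\theta_i<\mu_{i-1}^{++}$, which is less than $\ka_0$, makes the closure-under-$s_\alpha$ construction have small enough length to be carried out inside $S'_i$. The final model is $V[\LL\times\PP_t]$.

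Verification for a fixed limit $\alpha$ of cofinality $\w_i$ follows Section 3 step by step. Collapse $\la$ to $t(\alpha)^+$ by $K$, using Lemma \ref{lem:easton} to see that the collapse adds no new small sets, and reduce to a thin $t(\alpha)^+$-list $d'$, which is upward absolute. Project $\LL\times\PP_{t(\alpha),\la}$ onto $\LL\times\PP_t\times\Coll(t(\alpha)^+,\la)$ via Lemma \ref{lem:projections} blockwise. The collapses beyond stage $i$ in $\LL$ are absorbed using Lemma \ref{lem:colabsorption}, as in \cite{adkisson:manycofs}. Get a cofinal branch upstairs, then pull it back through the quotient and through $K$ using Lemma \ref{lem:M-S} with Fact \ref{fact:branchapprox}. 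Only cofinal branches are needed, which is why we use Lemma \ref{lem:gen2cardstrTP} with its $\mu^+$-closure requirement. The exception $\aleph_{\w+1}$ arises because the $\w$-cofinality piece cannot use a nontrivial $\Coll(\w,\rho)$ tail below the first singular.

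The main obstacle is the interaction between the two projections. The quotient $\PP^*$ must be sufficiently closed, at least $\mu^+$-closed relative to the small part $\LL$, whose size is now about $\mu_{i}$. Meanwhile the part of $\LL$ above $\mu_i$, namely the collapses chosen later in the game, must not destroy the approximation argument. I would first try to place all of $\LL$ below $\ka_0$ so that it is small relative to $t(\alpha)$, and check the hypotheses of Lemma \ref{lem:M-S}, refactoring $\PP_t$ at some $\beta$ with $|\LL|<t(\beta+1)$ exactly as in the last claim of Section 3.
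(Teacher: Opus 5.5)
\textbf{The gap is in how you build $t$.} At stage $i$ you fix arbitrary continuous sequences $s_\alpha$ of length $\w_i$ starting at $\ka_0$. You take $\prod_\eta E(s_\alpha(\eta)^{++},s_\alpha(\eta+1))$ as the bottom factor of $\PP_{\alpha,\la}$, run Fodor to get $\mu_i$, and only at the end look for a single closed $t$ good for all cofinalities.

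The blockwise projection needs $s_\alpha\subseteq t$ for every point $\alpha$ of cofinality $\w_i$ at which you want $\TP$. Because $s_\alpha$ is continuous and starts at $\min t$, its limit points of smaller cofinality $\w_k$ become limit points of $t$ near the bottom. There they are among the first $\theta_k$ singulars of cofinality $\w_k$ in the final model, so each must itself lie in $S'_k$ with $s_\beta\subseteq t$. Nothing ensures this.

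For example, suppose every $s_\alpha$ with $\alpha\in D_1$ begins with the first $\w+1$ elements of $S$. Then any admissible $t$ has the $\w$-th element of $S$ as its first limit point. That point need not be in $S'_0$, so your choice of $\mu_0$ gives no $\TP$ at its successor. The $s_\alpha$ and the $\mu_i$ are fixed before $t$ exists, so no after-the-fact ``version of Fact~\ref{fact:t}'' can repair this.

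\textbf{What the paper does instead.} It builds a separate segment $t_i$ immediately after $\mu_i$ is chosen, during the game, and places it above $\sup t_{i-1}$ by restricting $D_i$. It demands goodness only at the points of $t_i$ of cofinality $\w_i$ (Lemma~\ref{lem:t_manycofs_weak}); the lower-cofinality limit points inside $t_i$ are never among the ``first $\theta_k$''. The stage-$i$ poset then has three pieces:
\begin{itemize}
\item an extra factor $E(\ka_0^{++},\sup t_{i-1})$, which by Lemma~\ref{lem:projections} projects onto all previously built segments $\prod_{k<i}\PP^k_{t_k}$;
\item the product along $s_\alpha$, which handles $t_i$ below $\alpha$;
\item $E(\alpha^{++},\sup S)$, which projects onto the rest of $t_i$ and onto all later segments, whatever they turn out to be.
\end{itemize}
This is why the earlier segments must be fixed, or at least bounded, before the Fodor argument at stage $i$. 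Similarly, the stage-$i$ collapse poset $\LL^i(\mu_i)$ explicitly contains a $\mu_i^+$-closed factor $\Coll(\mu_i^+,<\ka_0)$, so that the future collapses can be absorbed by Lemma~\ref{lem:colabsorption}. Your ``$\Coll(\mu_{i-1}^{++},\mu^+_{\alpha,\la})$-type collapses'' leave this unspecified.

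\textbf{A separate bookkeeping error.} Your collapses $\Coll(\mu_{k-1}^{++},\mu_k)$ leave $\mu_{k-1}^{++}$ as a cardinal. Then $\mu_i^+$ becomes $\w_{2i+1}$, and $\cof(\mu_{i-1}^{++})$ is not $\w_i$. Also, a segment of order type $\theta_i<\mu_{i-1}^{++}$ then has no limit points of that cofinality at all. Use $\Coll(\mu_{k-1}^+,\mu_k)$ and cofinality $\mu_{i-1}^+$, as the paper does; then $\theta_i<\mu_{i-1}^{++}$ is the analogue of $\theta_0<\w_1$.
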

\begin{proof}

Let $\ka_0 = \min(S)$.
For each $\alpha \in \lim(S)$, fix an increasing continuous cofinal sequence $s_\alpha\subseteq S\cup \lim(S)$ in $\alpha$ of order type $\cf(\alpha)$, starting at $\ka_0$.

We will require the following generalization of Fact \ref{fact:t}.

\begin{lemma}\label{lem:t_manycofs_weak}
	Fix cardinals $\theta$ and $\mu$ such that $\theta < \mu^{++}$, and let $\gamma \in S$. Let $\alpha_0<\mu$, and suppose that for all clubs $E\subseteq \sup(S)$, $\sup\{s_\beta(\alpha_0) \mid \beta \in S\cap E\} = \sup(S).$
	Let $S'$ be a stationary subset of $\lim(S)\cap \sup(S) \cap \cof(\mu^+)$ and $n_0 < \w$ such that the first $n_0$ elements in $s_\alpha$ are the same for all $\alpha \in S'$.
	
	Then there is an $\mu^+$-closed sequence $t\subseteq S'$ with $\min(t) >\gamma$ and $\ot(t) = \theta$ such that for all $\alpha \in \lim(t)\cap \cof(\w_i)$, $\alpha \in S'$ and $s_\alpha \subseteq t$.
\end{lemma}
\begin{proof}
	We proceed by induction on $\theta$. We slightly strengthen the inductive hypothesis to require that for all $\gamma < \sup(S)$, there is $t$ meeting the conditions of the lemma, such that the first $n_0$ elements of $t$ are the fixed first $n_0$ elements of the sequences $s_\alpha$ for $\alpha$ in $S$, and $t(n_0+1) \geq \gamma$. 
	
	If $\theta = \alpha + \delta$ for $\delta < \mu^+$, then we apply the inductive hypothesis to $\alpha$ to obtain a sequence $t_\alpha$. Let $t = t_\alpha \cup [\alpha, \theta)$. Since we only require $\w_i$-closure, this sequence satisfies our requirements.
	
	Suppose $\cf(\theta) = \tau$ for some $\tau < \mu^+$, and $\theta$ is the limit of $\tau$-many ordinals, each of cofinality $\w_{i}$. Let $\langle \delta_\alpha \mid \alpha<\mu^+\rangle$ be such a sequence. By the inductive hypothesis, for each $\delta_\alpha$, we have a sequence $t_\alpha$ meeting the requirements with $\min(t_{\alpha+1}) > \sup t_\alpha$. Let $t = \bigcup_{\alpha<\tau} t_\alpha$.
	
	We move now to the case when $\theta$ has large cofinality.
	Suppose $\theta = \alpha + \mu^+$. We again apply the inductive hypothesis to obtain a sequence $t_\alpha$. Select $\beta \in S'$ with $\beta > \sup(t_\alpha)$. Let $t = t_\alpha \cup \{\sup(t_\alpha)\} \cup s_{\beta}$.
	
	Now suppose $\theta$ is a limit of limit ordinals, each with cofinality $\mu^+$. Let $\langle \delta_\alpha \mid \alpha<\mu^+\rangle$ be such a cofinal sequence. Fix some $\gamma_0 < \sup(S)$.
	
	Select $\nu \in S'$ such that for all $\zeta < \theta$ and all $\beta < \nu$ there is $t\subseteq \nu$ of order type $\zeta+1$ meeting the conditions of the lemma, such that $t(n_0+1) \geq \beta$ and $s_\nu(n_0+1) \geq \gamma_0$. We can do this by the stationarity of $S'$.
	
	Enumerate $s_\nu$ by $\langle \beta_\alpha \mid \alpha<\w_i\rangle$. Inducting on $\alpha$, we construct sequences $t_\alpha$ meeting the conditions of the lemma with the following additional properties:
	\begin{enumerate}
		\item $\ot (t_n) \geq \delta_n$
		\item $t_{\alpha}(n_0+1) > \sup_{\alpha' < \alpha} s_{\nu}(\alpha)$
		\item $t_{\alpha}(n_0+1) > \sup_{\alpha' < \alpha} \max(t_{\alpha'})$
		\item $t_0(n_0) \geq \gamma_0$
		\item $t_\alpha \subseteq \nu$.
	\end{enumerate}
	
	We define our final sequence to be $t = s_\nu \cup \bigcup_{\alpha < \mu^+} t_\alpha$. This will have order type at least $\theta$. Moreover, the $\mu^+$-limit points of this sequence consist of $\nu$ and the $\mu^+$-limit points of each $t_\alpha$; since these are closed by assumption, we meet the conditions of the lemma.
\end{proof}

We will define Player II's winning strategy (that is, the selection of the $\mu_i$'s) inductively. Along the way, we will define a number of auxiliary forcings that will be used to construct Player II's final model and prove that the appropriate cardinals have the strong tree property in this model.
	
Our argument is very similar to the previous section, combined with the arguments in \cite{adkisson:manycofs}. The extra layer of complexity introduced by multiple cofinalities is that when we select each $\mu_i$, not only do we need to select a cardinal that works for stationary many limits of supercompact cardinals, but we also need to select $\mu_i$ that will accommodate all later selections $\mu_j, i<j$. We do this by using a larger forcing that depends only on $\mu_i$ and the choices we've already made, and will project onto to the final model regardless of the later choices of $\mu_j$.
	 
	First, let's focus on countable cofinality.
	As before, let $\ka_0 = \min(S)$, and $D = \lim(S)\cap \sup(S) \cap (\cof(\w))$. For each $\alpha \in D$, let $s_\alpha$ be the pre-selected $\w$-sequence cofinal in $\alpha$.
	
	For each $\mu_0 < \ka_0$, define
	\[\LL^0(\mu_0) \defeq \Coll(\w, \mu_0) \times \Coll(\mu_0^+, <\ka_0) \times \prod_{\mu_0 < \alpha < \ka_0} \Coll(\mu_0^{++}, \ka_0).\]
	We select this poset because it will project to a more complicated poset that incorporates future selections, which will appear when we build the final model.
	
	For each $\alpha \in D$ and $\la \geq \ka_0$, we define the forcing $\PP^0_{\alpha,\la}$ as in the previous section:
	
	\[\PP^0_{\alpha,\la} = \left(\prod_{n<\w} E(s_\alpha(n)^{++}, s_\alpha(n+1))\right) \times E(\alpha^{++}, \sup(S))\times \Coll(\alpha^+, \la).\]
	
	For each such pair $(\alpha, \la)$, we apply Lemma \ref{lem:gen2cardstrTP}, noting that $|\Coll(\w, \mu_0)|\leq \mu_0$ and the remainder of $\LL^0(\mu_0)$ is $\mu_0^+$-closed. The required lifting behavior at each $\ka_\alpha$ is given by Lemma \ref{lem:liftingembeddings}. By Lemma \ref{lem:gen2cardstrTP}, we see that there is a cardinal $\mu_0^{\alpha, \la}$ such that the poset $\LL^0(\mu_0^{\alpha, \la}) \times \PP^0_{\alpha, \la}$ forces $\TP(\alpha^+, \alpha^+)$.
	
	For each fixed $\alpha \in D_0$, there is some $\mu_0^\alpha < \ka_0$ such that for unboundedly many $\la > \sup(S)$, $\mu_0^{\alpha, \la} = \rho_\alpha$. Applying Fodor's Lemma, we obtain $\mu < \ka$ and a stationary set $S' \subseteq D_0$, such that for all $\alpha \in S'$, $\mu_0^\alpha = \mu_0$.
	
	Fix $\theta_0 < \w_1$. Applying Lemma \ref{lem:t_manycofs_weak}, we obtain a closed subsequence $t_0\subseteq S\cup S'$ of order type $\theta_0$, such that for all $\alpha \in \lim(t_0)$, $\alpha \in S'$ and $s_\alpha \subseteq t_0$.
	
	Define $\PP_{t_0}^0:$
	
	\[\PP_{t_0}^0 \defeq \prod_{i<\theta_0} E(t_0(i)^{++}, t_0(i+1)).\]
	
	Now, working inductively, suppose the players have defined sequences $\langle \mu_n \mid n< i \rangle$ and $\langle \theta_n \mid n<i\rangle$ as in the description of the game. Suppose Player II has also defined sequences $\langle t_n \mid n<i\rangle$.
	
	Player I makes their next move, giving Player II a fixed $\theta_i < \mu_{i-1}^{++}$.
	
	For all $\mu_i < \ka_0$, we define a forcing $\LL^i(\mu_i)$ as follows. Define
	\[\LL^i_0(\mu_i) \defeq \Coll(\w, \mu_0) \times \left(\prod_{n<i-1} \Coll(\mu_n^+, \mu_{n+1})\right)\times \Coll(\mu_{i-1}^+, \mu_i)\]
	and
	\[\LL^i_1(\mu_i) \defeq \Coll(\mu_i^+, <\ka_0) \times	\prod_{\mu_0 < \alpha < \ka_0} \Coll(\mu_0^{++}, \ka_0).\]
	Finally, we define
	\[\LL^i(\mu_i) = \LL^i_0(\mu_i) \times \LL^i_1(\mu_i).\]
	
	Let $D_i = (\lim(S) \cap \sup(S)\cap \cof(\mu_{i-1}^+))\sm \sup(S_{i-1})$. For each $\alpha \in D$, let $s_\alpha$ be the preselected $(\mu_{i-1}^+)$-sequence $s_\alpha$ cofinal in $\alpha$.
	
	For all $\alpha \in D_i$ and all $\la \geq \ka_0$, define the forcing $\PP_{\alpha, \la}^i$ as follows:
	
	\[\PP_{\alpha, \la}^i = E(\ka_0^{++}, \sup(t_{i-1})) \times \left(\prod_{\beta < \mu_{i-1}^+} E(s_\alpha(\beta)^{++}, s_\alpha(\beta+1))\right) \times E(\alpha^{++}, \sup(S)) \times \Coll(\alpha^+, \la). \]
	
	For all such pairs $(\alpha, \la)$, we wish to apply Lemma \ref{lem:gen2cardstrTP}. We can do so because $\LL^i_0(\mu_i)$ has size $\leq \mu_i$, and $\LL^i_1$ is $\mu_i^+$-closed. As before, the lifting of embeddings is given by Lemma \ref{lem:liftingembeddings}. Applying Lemma \ref{lem:gen2cardstrTP}, for each such pair $(\alpha, \la)$ we obtain a cardinal $\mu_i^{\alpha, \la}$ such that the poset $\LL^i(\mu_i^{\alpha,\la})\times \PP_{\alpha, \la}^i$ forces the strong tree property at $\alpha^+$. For each $\alpha$, we obtain $\mu_i^\alpha$ such that $\mu_i^{\alpha, \la} = \mu_i^\alpha$ for unboundedly many $\la$. By Fodor's Lemma we can thin $D_i$ to a stationary subset $S'_i$ such that for all $\alpha \in S'$, $\mu_\alpha = \mu$ for some fixed $\mu < \ka_0$. 
	Player II makes their move, selecting this fixed $\mu$ to be $\mu_i$.
	
	Applying Lemma \ref{lem:t_manycofs_weak}, we obtain a $\mu_{i-1}^+$-closed sequence $t_i\subseteq S\cup S'$ of order type $\theta_i$ such that for all $\alpha \in \lim(t_i)\cap \cof(\mu_{i-1}^+)$, $\alpha \in S'$ and $s_\alpha \subseteq t_i$.
	
	Finally, we define $\PP_{t_i}^i$:
	
	\[\PP_{t_i}^i\defeq \prod_{\beta<\theta_i} E(t_i(\beta)^{++}, t_i(\beta+1)).\]
	
	Now we prove that this strategy gives a win for Player II. That is, we will define a forcing with the desired properties with respect to the sequences $\langle \mu_i \mid i<\w\rangle$ and $\langle \theta_i \mid i<\w\rangle$ produced by this play of the game. Define
	
	\[\LL \defeq \Coll(\w, \mu_0) \times \left(\prod_{n<\w} \Coll(\mu_n^+, \mu_{n+1})\right) \times \Coll((\sup_{n<\w} \mu_n)^+, <\ka_0)\]
	and
	\[\PP = \prod_{i<\w} \PP^i_{t_i}.\]
	Let $L$ be generic for $\LL$, and $G$ be generic for $\PP$.
	We note the cardinal structure in this model. In $V[L]$, $(\mu_i^+)^V = \aleph_i$, with $\sup_{i<\w} \mu_i = \aleph_\w$, and $\ka_0 = \aleph_{\w+2}$. 
	
	To finish the proof of the theorem, it suffices to, for all $i<\w$ and all $\alpha < \theta_i$, verify the tree property at the $\alpha$-th successor of a singular of cofinality $\w_i$. This cardinal will be $t_i(\alpha)$.
	
	\begin{lemma}\label{lem:projection_i}
		Fix $i < \w$, $\alpha < \mu_i$, and $\la \geq t_i(\alpha)^+$. Then $\LL^i(\mu_i) \times \PP_{t(\alpha)^+,\la}^i$ projects onto $\LL\times \PP\times \Coll(t(\alpha)^+,\la)$.
	\end{lemma}
	\begin{proof}
		We verify the projection in pieces. First, we claim that $\LL_i(\mu_i)$ projects onto $\LL$. The arguments here are very similar to those in \cite{adkisson:manycofs}. Recall that we split $\LL_i(\mu_i)$ into two pieces,
			\[\LL^i_0(\mu_i) = \Coll(\w, \mu_0) \times \left(\prod_{n<i-1} \Coll(\mu_n^+, \mu_{n+1})\right)\times \Coll(\mu_{i-1}^+, \mu_i)\]
		and
		\[\LL^i_1(\mu_i) = \Coll(\mu_i^+, <\ka_0) \times \prod_{\mu_0 < \alpha < \ka_0} \Coll(\mu_0^{++}, \ka_0).\]
		
		Similarly, we can split $\LL$ into two pieces:
		
		\[\LL_0 \defeq \Coll(\w, \mu_0) \times \prod_{n<i} \Coll(\mu_n^+, \mu_{n+1})\]
		and
		\[\LL_1 = \left(\prod_{i\leq n < \w} \Coll(\mu_n^+, \mu_{n+1})\right) \times \Coll((\sup_{n<\w} \mu_n)^+, <\ka_0).\]
		
		$\LL_0^i(\mu_i)$ projects onto $\LL_0$ via the identity. $\LL_1^i(\mu_i)$ projects onto $\LL_1$: the first part projects by Lemma \ref{lem:colabsorption}, and the second part projects by simply taking the appropriate coordinate of the product. We conclude that $\LL_i(\mu_i)$ projects onto $\LL$. The quotient of this projection has size $<\ka_0^{++}$.
		 
		Next, we claim that $E(\ka_0^{++}, \sup(t_{i-1}))$ projects onto $\PP_{<i}$. This follows from Lemma \ref{lem:projections}. Both of these posets %have size $<t_i(\alpha)^+$,
		are $\ka_0^{++}$-closed, so the corresponding quotients must be as well.
		By the same arguments as in the previous section, $\PP^i_{t_i(\alpha)^+,\la}$ projects onto  $\PP_{t_i}^i$, with a $\ka_0^{++}$-closed quotient.
		Finally, $E(t_i(\alpha)^{++}, \sup(S))$ projects onto $\PP_{>i}$, with a $t_i(\alpha)^{++}$ closed quotient.
	\end{proof}
	
	\begin{lemma}
		Fix $i<\w$ and $\alpha < \mu_i$. Then in $V[L][G]$, the strong tree property holds at $t_i(\alpha)^+$.
	\end{lemma}
	\begin{proof}
		Fix $i < \w$ and $\alpha < \mu_i$. Fix $\la \geq t_i(\alpha)^+$ such that $\la$ is strong regular and $\mu_i^{\alpha,\la} - \mu_i$. Since this holds for unboundedly many $\la$, by Fact \ref{fact:ITPgoesdownwards} it suffices to verify $\TP(t_i(\alpha)^+, \la)$ for only these $\la$.
		
		Let $d$ be a thin $\mc{P}_{t_i(\alpha)^+}(\la)$-list. Let $K$ be generic for $\Coll(t_i(\alpha)^+,\la)^V$. Since $\Coll(t_i(\alpha)^+,\la)^V$ is $<t_i(\alpha)^+$-distributive by Lemma \ref{lem:easton}, $\mc{P}_{t_i(\alpha)^+}(\la)^{V[L][G]} = \mc{P}_{t_i(\alpha)^+}(\la)^{V[L][G][K]}$. (Since the two sets are the same, we will omit the superscripts.)
		
		In $V[L][G][K]$, $\la$ is an ordinal of cardinality and cofinality $t_i(\alpha)^+$. In particular, $\mc{P}_{t_i(\alpha)^+}(\la)$ is isomorphic to $\mc{P}_{t_i(\alpha)^+}(t_i(\alpha)^+)$ in this model. It follows that there is a thin $\mc{P}_{t_i(\alpha)^+}(t_i(\alpha)^+)$-list $d'$ isomorphic to $d$.
		
		Let $d^* = \{d'_\alpha \mid \alpha < t_i(\alpha)^+\}$; that is, $d'$ restricted to ordinals. Note that $d^*$ is a $t_i(\alpha)^+$-list; since $t_i(\alpha)^+$ is club in $\mc{P}_{t_i(\alpha)^+}(t_i(\alpha)^+)$, $d^*$ has a cofinal branch if and only if $d'$ (and thus $d$) has a cofinal branch.
		
		By Lemma \ref{lem:projection_i}, $\LL^i_{\mu_i} \times \PP^i_{t_i(\alpha), \la}$ projects onto $\LL\times \PP$. Let $L' \times G'$ be a generic for $\LL^i_{\mu_i} \times \PP^i_{t_i(\alpha),\la}$ projecting onto $L\times G$. By construction (and our specific choice of $\mu_i$), $\TP(t_i(\alpha)^+,t_i(\alpha)^+)$ holds in $V[L^*][G^*][K]$.
		
		Since being a $t_i(\alpha)^+$-list is upwards absolute, $d^*$ remains a $t_i(\alpha)^+$-list in this model. It follows that $d^*$ has a cofinal branch $b^*$ in $V[L^*][G^*][K]$.
		
		\begin{claim}
		The branch $b^*$ is in $V[L][G][K]$.
		\end{claim}
		\begin{proof}
			By the projection analysis in Lemma \ref{lem:projection_i}, $V[L'][G'][K] = V[L][G][K][L^*][G^*_0][G^*_1]$, where $L^*\times G_0^*\times G_1^*$ is generic for the quotient $(\LL^i_{\mu_i}\times \PP^i_{t_i(\alpha)})/(\LL\times \PP)$. $L'\times G'_0$ is generic for a forcing of size $<\ka_{0}^{++}$, and $G_1'$ is generic for a $\ka_0^{++}$-closed forcing. By Lemma \ref{lem:M-S}, the closed part has the thin $t_i(\alpha)^+$-approximation property over the rest.
			
			By Lemma \ref{lem:cctoapprox}, $b^*$ is thinly approximated by this part of the quotient over $V[L][G][K][L^*][G^*_0]$. We conclude that the closed part can't have added $b$, so $b^*$ is in $V[L][G][K][L^*][G^*_0]$.
			
			The remainder of the forcing has size at most $\ka_0^+$. Thus by Lemma \ref{lem:squarecctoapproxprop} it has the approximation property over $V[L][G][K]$. By Fact \ref{fact:branchapprox}, $b^*$ is thinly approximated (by any forcing) over this model. We conclude that $b^* \in V[L][G][K]$.
		\end{proof}
		
		Since $d^*$ has a cofinal branch if and only if $d$ has a cofinal branch, we conclude that $d$ has a cofinal branch $b$ in $V[L][G][K]$. Since $d$ is defined in $V[L][G]$, it must be thinly $t_i(\alpha)^+$-approximated by any forcing. 
		
		As before, $b$ is thinly $t(\alpha)^+$-approximated by $\Coll(t_i(\alpha)^+, \la)^V$ over $V[L][G]$.
		It remains to show that $\Coll(t_i(\alpha)^+, \la)^V$ has the thin $t_i(\alpha)^+$-approximation property and thus could not have added a cofinal branch.
		
		Fix $\delta$ such that $\mu_{i-1}^{+} < \delta < \theta_i$. Factor $\PP^i_{t_i}$ as
		\[\PP_{t_i}^i = \left(\prod_{\beta \leq \delta} E(t_i(\beta)^{++}, t_i(\beta+1))\right)\times \left( \prod_{\delta < \beta < \theta_i} E(t_i(\beta)^{++}, t_i(\beta+1))\right).\]
		Let $\PP^i_{t_i, \leq\delta}$ denote the first component and $\PP^i_{t_i, >\delta}$ the second component, with respective generics $G^i_{t_i,\leq \delta}$ and $G^i_{t_i, >\delta}$. Since $\PP^i_{t_i, \leq\delta}$ is $t(\delta+1)^{++}$-closed, in $V[G^i_{t_i, >\delta}]$ the collapse $\Coll(t_i(\alpha)^+, \la)^V$ is still $t(\delta+1)^{++}$-closed. $\PP_{t_i, \leq\delta}\times \LL$ has size $<t_i(\delta+1)^{++}$.  Applying Lemma \ref{lem:M-S}, we see that $\Coll(t(\alpha)^+, \la)^V$ has the thin $\nu^+$ approximation property in the model $V[G^i_{t_i, >\delta}][G^i_{t_i, \leq \delta}][L] = V[L][G]$. Then $b$ could not have been added by the collapse.
		
		We conclude that $d$ must have a cofinal branch in $V[L][G]$ as desired.
	\end{proof}
	This concludes the proof; since Player II can always select $\mu_i$ such that the strong tree property holds at each $t_i(\alpha)^+$ for $\alpha < \theta_i$ in some generic extension, they have a winning strategy in the game.
\end{proof}

\section{Acknowledgements}
The author is grateful to Itay Neeman for many productive and helpful conversations.

\bibliography{bib}
\bibliographystyle{plain}
\end{document}